\newtheorem*{corollary*}{Corollary}
\newtheorem{theorem}{Theorem}[section]
\newtheorem*{theorem*}{Theorem}
\newtheorem{corollary}[theorem]{Corollary}
\newtheorem{Question M}[theorem]{Question M}
\newtheorem{Conjecture S}[theorem]{Conjecture S}
\newtheorem{Conjecture K}[theorem]{Conjecture K}
\newtheorem{lemma}[theorem]{Lemma}
\newtheorem{proposition}[theorem]{Proposition}
\newtheorem{question}[theorem]{Question}
\newtheorem{problem}[theorem]{Problem}
\newtheorem*{claim*}{Claim}
\newtheorem*{conjecture}{Conjecture}
\theoremstyle{definition}
\newtheorem{definition}[theorem]{Definition}
\newtheorem*{theorem }{Theorem}
\newtheorem{example}[theorem]{Example}
\theoremstyle{remark}
\numberwithin{equation}{theorem}
\renewcommand*\env@matrix[1][\
arraystretch]{%
  \edef\arraystretch{#1}%
  \hskip -\arraycolsep
  \let\@ifnextchar\new@ifnextchar
  \array{*\c@MaxMatrixCols c}}
\newcommand{\Ext}{\operatorname{Ext}}
\newcommand{\im}{\operatorname{Im}}
\newcommand{\Hom}{\operatorname{Hom}}
\newcommand{\grade}{\operatorname{grade}}
\newcommand{\cov}{\operatorname{cov}}
\newcommand{\Ocal}{\mathcal{O}}
\renewcommand{\ker}{\mathrm{Ker}}
\newcommand{\pdim}{\operatorname{pdim}}
\newcommand{\gldim}{\operatorname{gldim}}
\title{Pure minimal injective resolutions and perfect modules for lattices}
\author[T.~Gottesman]{Tal Gottesman}
\address[T.~Gottesman]{Fakult\"{a}t f\"{u}r Mathematik, Ruhr-Universit\"at Bochum, Universit\"{a}tstr. 150, 44801 Bochum, Germany}
\email{tal.gottesman@rub.de}
\author[V.~Kl\'asz]{Vikt\'oria Kl\'asz$^\star$}%
\address[V.~Kl\'asz]{Mathematical Institute of the University of Bonn, Endenicher Allee 60, 53115 Bonn, Germany}%
\email{klasz@math.uni-bonn.de}%
\thanks{$^\star$Supported by the Deutsche Forschungsgemeinschaft
(DFG, German Research Foundation) under Germany's Excellence Strategy - GZ 2047/1, Projekt-ID
390685813\\}
\author[M.~Kleinau]{Markus Kleinau$^\star$}%
\address[M.~Kleinau]{Mathematical Institute of the University of Bonn, Endenicher Allee 60, 53115 Bonn, Germany}%
\email{mkleinau@math.uni-bonn.de}%
\author[R.~Marczinzik]{Ren\'e Marczinzik}%
\address[R.~Marczinzik]{Mathematical Institute of the University of Bonn, Endenicher Allee 60, 53115 Bonn, Germany}
\email{marczire@math.uni-bonn.de}
\date{\today}
\subjclass[2010]{Primary 16G10, 16E10, 06A11}
\keywords{partially ordered sets, distributive lattices, Auslander regular algebras}
\begin{document}
\begin{abstract}
In a recent article, Iyama and Marczinzik showed that a lattice is distributive if and only if the incidence algebra is Auslander regular, giving a new connection between homological algebra and lattice theory. In this article we study when a distributive lattice has a pure minimal injective coresolution, a notion first introduced and studied in a work of Ajitabh, Smith and Zhang. 
We will see that this problem naturally leads to studying when certain antichain modules are perfect modules. We give a classification of perfect antichain modules under the assumption that their canonical antichain resolution is minimal and use this to give a complete classification in lattice theoretic terms of incidence algebras of distributive lattices with pure minimal injective coresolution.
We use our results to answer a question raised by Ajitabh, Smith and Zhang by showing that there exist Auslander-Gorenstein polynomial identity rings without a pure injective coresolution.

\end{abstract}
\maketitle

\section{Introduction}
A fundamental invariant of a two-sided noetherian ring $R$ is its minimal injective coresolution
$$0 \rightarrow R \rightarrow I^0 \rightarrow I^1 \rightarrow \cdots \rightarrow I^n \rightarrow \cdots $$
If this resolution is finite for $R$ as a left and right $R$-module, then $R$ is called \emph{Iwanaga-Gorenstein}. 
If $R$ is Iwanaga-Gorenstein and the flat dimension of $I^i$ is less than or equal to $i$ for all $i \geq 0$, then $R$ is called \emph{Auslander-Gorenstein}. An Auslander-Gorenstein ring that has additionally finite global dimension is called \emph{Auslander regular}.
By classical work of Bass \cite{B}, in the case of commutative local $R$, the Auslander-Gorenstein rings (resp. Auslander regular rings) coincide with the classical Gorenstein rings (resp. regular rings) studied in commutative algebra and algebraic geometry. Many other important classes of noetherian rings are Auslander regular, such as universal enveloping algebra of finite dimensional Lie algebras, Weyl algebras and rings of $\mathbb{C}$-linear differential operators on an irreducible smooth subvariety of an affine space, see for example \cite{VO}. Recently, it was shown that several important classes of finite dimensional algebras are Auslander regular, such as blocks of category $\mathcal{O}$ for semisimple Lie algebras \cite{KMM} and incidence algebras of distributive lattices \cite{IM}. We refer for example to the survey articles \cite{C} and \cite{KM} and the textbook \cite{VO} for more information on Auslander regular rings.
In \cite{ASZ} and \cite{ASZ2}, the authors introduced (in a slightly greater generality) the following concept, which is the center of our study in this article:
\begin{definition}
Let $R$ be a two-sided noetherian Iwanaga-Gorenstein ring with minimal injective coresolution 
$$0 \rightarrow R \rightarrow I^0 \rightarrow I^1 \rightarrow \cdots \rightarrow I^n \rightarrow 0$$
Then $R$ is said to have a \emph{pure minimal injective coresolution} if every non-zero noetherian submodule $X$ of $I^i$ satisfies $\grade X=i$.
\end{definition}
Here the \emph{grade} of a module $M$ is defined as $\grade M= \inf \{ i \geq 0 \mid \Ext_R^i(M,R) \neq 0 \}$, which is a classical homological dimension studied in commutative and non-commutative ring theory, see for example \cite{BH} and \cite{VO}.
In \cite{ASZ}, the authors studied the concept of pure minimal injective coresolutions for several important classes of two-sided noetherian rings of low dimension, such as universal enveloping algebras of Lie algebras, Sklyanin algebras, and certain Artin-Schelter graded regular rings. All of these classes are, in fact, Auslander regular rings.
In this article, we want to study this concept for the first time for finite dimensional algebras.
Let $A$ be a finite dimensional $K$-algebra over a field $K$ in the following. We assume that modules are finitely generated right $A$-modules unless otherwise stated. Note that for a finite dimensional algebra, the flat dimension of a module coincides with the projective dimension of the module.
Following \cite{Bj}, we call a module $M$ \emph{pure} if every submodule $N$ of $M$ has the same grade as $M$. We call $M$ \emph{perfect} if $\grade M= \pdim M$. The study of perfect modules is a classical topic in commutative algebra, see for example \cite{BH}.
By definition, a finite dimensional Iwanaga-Gorenstein algebra $A$ has a minimal pure injective coresolution if and only if every term $I^i$ is pure of grade $i$.

We give a full classification when Auslander regular algebras have a pure minimal injective coresolution:
\begin{theorem} \label{mainresult} (\Cref{cor::AR+pure-iff-inj+simples-perfect})
Let $A$ be an Auslander regular algebra.
Then the following are equivalent:
\begin{enumerate}
    \item $A$ has a minimal pure injective coresolution.
    \item Every simple $A$-module and every indecomposable injective $A$-module is perfect.
\end{enumerate}
\end{theorem}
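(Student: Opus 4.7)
The backbone of the argument is the decomposition $I^i = \bigoplus_S I(S)^{\mu^i(S)}$ of the $i$-th term of the minimal injective coresolution, where $\mu^i(S) = \dim_K \Ext_A^i(S,A)$ is the $i$-th Bass number, together with the following fact characteristic of Auslander regular rings: a finitely generated module $M$ is pure of grade $g$ if and only if $\Ext_A^j(M,A) = 0$ for all $j \neq g$, equivalently, $M$ is perfect of grade $g$. The ``$\Ext$ concentrated in a single degree'' characterization of perfection is standard over algebras of finite global dimension via minimality of the projective resolution, and the equivalence with purity in the Auslander regular setting is a consequence of the Auslander condition together with bi-duality. Granted this, direct sums and direct summands of perfect modules of the same grade remain perfect, since $\Ext$ commutes with direct sums.

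For the implication (1) $\Rightarrow$ (2), suppose each $I^i$ is pure of grade $i$. If $I(S)$ is a summand of $I^i$, then the socle inclusion $S \hookrightarrow I(S) \hookrightarrow I^i$ forces $\grade S = i$ by purity, so $\Ext_A^j(S,A)$ is non-zero only for $j = \grade S$, making $S$ perfect; and the summand $I(S)$ of the pure (equivalently perfect) module $I^i$ is itself perfect.

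For the implication (2) $\Rightarrow$ (1), perfection of every simple yields that $I(S)$ appears only in $I^{\grade S}$, so $I^i = \bigoplus_{\grade S = i} I(S)^{\mu^i(S)}$, with each summand $I(S)$ perfect by assumption. Auslander regularity gives $\pdim I(S) \leq \pdim I^i \leq i$, hence $\grade I(S) = \pdim I(S) \leq i$. For the reverse inequality $\grade I(S) \geq i$, consider the short exact sequence $0 \to S \to I(S) \to Q \to 0$: assuming $\grade I(S) < i$, the perfection of $I(S)$ forces $\Ext_A^i(I(S),A) = 0$, and the associated long exact sequence produces an injection $\Ext_A^i(S,A) \hookrightarrow \Ext_A^{i+1}(Q,A)$. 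But $\Ext_A^i(S,A)$ has grade exactly $i$ as a left $A$-module by bi-duality applied to the perfect simple $S$ of grade $i$, whereas the Auslander condition forces every non-zero submodule of $\Ext_A^{i+1}(Q,A)$ to have grade $\geq i+1$, a contradiction. Therefore $\grade I(S) = i$, each summand of $I^i$ is perfect of grade $i$, and $I^i$ itself is perfect (equivalently, pure) of grade $i$.

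The main obstacle is establishing the pure-equals-perfect equivalence in the Auslander regular setting, as this is what allows one to translate the submodule-theoretic condition defining purity into the $\Ext$-vanishing condition defining perfection. Once this equivalence is in hand, the remainder is careful bookkeeping with the Bass decomposition of $I^i$ and the Auslander condition applied to the quotient $Q = I(S)/S$.
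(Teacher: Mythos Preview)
Your backbone assertion---that over an Auslander regular algebra a finitely generated module $M$ is pure of grade $g$ if and only if it is perfect of grade $g$---is false. Only the implication ``perfect $\Rightarrow$ pure'' holds (indeed, $\Ext_A^v(M,A)=0$ for $v\neq g$ trivially forces the double-$\Ext$ vanishing in Bj\"ork's criterion). The converse fails already over the incidence algebra of the Boolean lattice $B_2$ on $\{0,a,b,1\}$: the module $R=\rad P(0)$ has $\grade R=0$ (the inclusion $R\hookrightarrow P(0)$ is a nonzero element of $\Hom_A(R,A)$) but $\pdim R=1$, so $R$ is not perfect; yet the only nonzero submodules of $R$ are $S(1)=P(1)$, $P(a)$, $P(b)$ and $R$ itself, all of grade $0$, so $R$ is pure of grade $0$. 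The paper never claims this equivalence; it uses Bj\"ork's correct criterion (Theorem~\ref{Bjoerklemma}): $M$ is pure of grade $g$ iff $\Ext_{A^{op}}^v(\Ext_A^v(M,A),A)=0$ for all $v\neq g$, which is strictly weaker than $\Ext_A^v(M,A)=0$.

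Fortunately your concrete steps survive this correction. In $(1)\Rightarrow(2)$ you only need ``$I^i$ pure of grade $i$ $\Rightarrow$ $I^i$ perfect'': this does hold, not by your general claim, but because the Auslander condition already gives $\pdim I^i\le i$, whence $i=\grade I^i\le\pdim I^i\le i$; this is exactly the paper's argument in Proposition~\ref{lemmapure}. In $(2)\Rightarrow(1)$ you only use ``perfect $\Rightarrow$ pure'', which is valid. Your Bass-number argument for the perfection of the simples, and your long-exact-sequence argument for $\grade I(S)\ge i$ (using that $\Ext_A^i(S,A)$ has grade exactly $i$ by bi-duality for perfect modules, while every submodule of $\Ext_A^{i+1}(Q,A)$ has grade $\ge i+1$ by the Auslander condition), are both correct and give a self-contained alternative to the paper's route. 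The paper instead applies Bj\"ork's criterion directly and, for the equality $\pdim I(S)=i$, cites \cite{KMT} for the equivalence of right-diagonality with perfection of all simples, where you rederive what is needed from scratch.
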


Note that for an Auslander regular algebra $A$ all simple modules being perfect is equivalent to $A$ being a \emph{right diagonal Auslander regular algebra} in the sense of Iyama \cite{I}, see \cite[Corollary 3.4]{KMT}. Here being right diagonal means that every indecomposable direct summand $X$ of $I^i$ in the minimal injective coresolution of $A$ has projective dimension equal to $i$.
While in general it is extremely difficult to determine whether a given algebra has a minimal pure injective coresolution, the previous Theorem \ref{mainresult} allows us to decide it at least for Auslander regular algebras by reducing the problem to studying homological properties of simple and indecomposable injective modules.
In \cite{IM}, it was shown that the incidence algebra $KL$ of a finite lattice $L$ is Auslander regular if and only if $L$ is distributive, and that in this case, $KL$ is even right diagonal Auslander regular.
Recall that by Birkhoff's classical representation theorem, every finite distributive lattice $L$ is given as the set of order ideals of a finite poset $P$.
We call a poset $P$ \emph{upward-linear} if for all $x \in P$ we have that $x$ is covered by at most one other element in $P$. An example of an upward-linear poset can be found in \Cref{ex::upward_linear_poset}. The upward-linear posets with n elements are in bijection with unlabeled forests of rooted trees with n nodes, see \url{https://oeis.org/A000081}.
Our second main result gives a classification when the incidence algebra of a distributive lattice $L$ has a minimal pure injective coresolution:
\begin{theorem} \label{secondmainresult}(\Cref{pureAreUpwardLinear})
Let $L$ be a finite distributive lattice. Then the incidence algebra $KL$ has a minimal pure injective coresolution if and only if $L$ is the order ideal lattice of an upward-linear poset.
\end{theorem}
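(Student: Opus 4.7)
The plan is to combine Theorem \ref{mainresult} with the known Auslander regularity of $KL$ and then perform a combinatorial translation via Birkhoff's representation theorem $L = J(P)$.

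First, I would reduce the problem to a statement about indecomposable injectives. By \cite{IM}, for distributive $L$ the incidence algebra $KL$ is right diagonal Auslander regular, which by the remark after Theorem~\ref{mainresult} (invoking \cite[Corollary 3.4]{KMT}) is equivalent to every simple $KL$-module being perfect. Hence, by Theorem~\ref{mainresult}, $KL$ has a minimal pure injective coresolution if and only if every indecomposable injective $KL$-module $I_x$ (indexed by $x \in L$) is perfect. So the theorem becomes: every $I_x$ is perfect iff the poset $P$ underlying $L = J(P)$ is upward-linear.

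Second, I would analyze each $I_x$ through the antichain module framework announced in the abstract. For $L = J(P)$, the upper covers of an order ideal $x$ correspond bijectively to the minimal elements of $P \setminus x$, so the antichain $A_x := \min(P \setminus x) \subseteq P$ controls the top of the first cosyzygy of $I_x$, and more generally the canonical antichain coresolution of $I_x$ is built from the upper order filter generated by $A_x$ inside $L$. A crucial preliminary check is that this canonical antichain coresolution of $I_x$ is minimal for every $x$; this should follow from the distributivity of $L$. Once minimality is in hand, the paper's classification of perfect antichain modules (under minimality of the canonical antichain resolution) applies verbatim and gives a combinatorial criterion on $A_x$ that is equivalent to $I_x$ being perfect.

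Third, I would perform the combinatorial translation. Every antichain of $P$ arises as $A_x$ for some order ideal $x$ (take $x$ to be the ideal generated by the strict downset of the antichain), so the perfection criterion must hold for \emph{all} antichains of $P$. For the forward direction, if some $z \in P$ has two distinct upper covers $y_1, y_2$, I would exhibit $x$ with $\{y_1, y_2\} \subseteq A_x$ and show directly that the antichain module associated to $\{y_1,y_2\}$ violates the perfection criterion, because the projective dimension grows strictly beyond the grade (the common lower bound $z$ forces an extra syzygy term). For the converse, assuming $P$ is upward-linear, each $A_x$ decomposes into antichains lying in disjoint upward cones (since distinct elements cannot share an upper cover), which reduces the computation of $\pdim I_x$ to that of a direct sum of "linear" pieces whose projective dimension coincides with the grade by inspection.

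The main obstacle is the combinatorial lemma bridging the antichain perfection criterion and upward-linearity of $P$: one must verify that the criterion fails in the presence of any element with two upper covers, and holds uniformly in the forest case. The rest is a direct application of Theorem~\ref{mainresult}, the Iyama--Marczinzik result, and Birkhoff's theorem.
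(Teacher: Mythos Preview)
Your approach is correct but takes a genuinely different route from the paper's proof. After the common reduction to ``every indecomposable injective is perfect'' (the paper's Corollary~\ref{purereducedtoinjperfectcor}), you propose to invoke the Boolean antichain criterion of Proposition~\ref{prop::perfect-iff-Boolean} directly: for $L=J(P)$ the antichain $C_x=\min([m,x]^c)\subseteq L$ is $\{\downarrow p : p\in\min(P\setminus x)\}$, and $C_x$ is Boolean if and only if the principal ideals $\downarrow p_i$ are pairwise disjoint in $P$. This holds for every $x$ precisely when $P$ is upward-linear, since any common lower bound $z$ of two antichain elements $p_i,p_j$ would place both in the chain $\uparrow z$. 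The paper instead introduces a fourth equivalent condition---the cover-count monotonicity $|\cov(a)|\geq|\cov(b)|$ for every $a\lessdot b$---and proves (2)$\Leftrightarrow$(4) via the grade formula $\grade I(x)=\min_{c\leq x}|\cov(c)|$ (Proposition~\ref{gradeFormula}, resting on Bj\"ork's short-exact-sequence grade lemma together with $\grade S(x)=|\cov(x)|$ from \cite{IM}), and then (3)$\Leftrightarrow$(4) by an elementary cover-counting argument in $J(P)$. Your route is the natural application of Section~4 and is arguably more conceptual; the paper's route has the advantage of producing the extra lattice-theoretic characterisation~(4) along the way and avoids relying on the Ext computations behind Proposition~\ref{prop::perfect-iff-Boolean}.

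Two small points to clean up in your write-up. First, the antichain governing $I(x)$ lives in $L$, not in $P$; you are implicitly using the bijection $p\mapsto\downarrow p$ between $\min(P\setminus x)$ and $\min([m,x]^c)$, and the Boolean condition should be checked in $L$ (where it becomes the disjointness of the $\downarrow p_i$). Second, in your converse step the relevant disjointness is of the \emph{down-sets} $\downarrow p_i$, not of ``upward cones'': upward-linearity says each $\uparrow z$ is a chain, which is exactly what forces any two incomparable $p_i,p_j$ to have empty common down-set. (Also, the canonical antichain complex for $I(x)$ is a projective \emph{resolution}, not a coresolution.)
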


We say that a finite dimensional algebra $A$ has a \emph{two-sided minimal pure injective coresolution} when $A$ and its opposite algebra $A^{op}$ both have a minimal pure injective coresolution.
Recall that a lattice $L$ is a \emph{divisor lattice} if $L$ is isomorphic to the lattice of divisors of a natural number $n$.
The following corollary gives a homological characterisation of divisor lattices:
\begin{corollary} (\Cref{cor::2-sided-pure-inj-res-iff-divisor-lattice})
Let $L$ be a finite distributive lattice.
Then $L$ is a divisor lattice if and only if $KL$ has a two-sided pure minimal injective coresolution.

\end{corollary}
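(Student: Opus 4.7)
The plan is to reduce the corollary to the combinatorial characterization of \Cref{secondmainresult}, and then to recognize the resulting class of posets as yielding exactly the divisor lattices. Since $(KL)^{op}\cong KL^{op}$ as $K$-algebras, the incidence algebra $KL$ has a two-sided pure minimal injective coresolution if and only if both $KL$ and $KL^{op}$ have a minimal pure injective coresolution, which by applying \Cref{secondmainresult} to each side amounts to the requirement that both $L$ and $L^{op}$ be order ideal lattices of upward-linear posets.

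Writing $L=J(P)$ via Birkhoff's theorem, the opposite lattice $L^{op}$ is obtained by complementation: order ideals of $P$ biject with order filters of $P$, which in turn are exactly the order ideals of $P^{op}$, so $L^{op}\cong J(P^{op})$. Hence the combined condition becomes that both $P$ and $P^{op}$ are upward-linear, that is, every element of $P$ has at most one upper cover \emph{and} at most one lower cover. This is equivalent to $P$ being a disjoint union of finite chains.

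Finally, I would match this with the divisor lattices. If $P=\bigsqcup_{i=1}^k C_{a_i}$, where $C_{a_i}$ denotes the chain on $a_i$ elements, then $J(P)\cong\prod_{i=1}^k C_{a_i+1}$, since the order ideals of $C_{a_i}$ form a chain on $a_i+1$ elements and $J$ sends disjoint unions of posets to direct products of lattices. Conversely, the divisor lattice of $n=p_1^{a_1}\cdots p_k^{a_k}$ for distinct primes $p_i$ is precisely $\prod_{i=1}^k C_{a_i+1}$, and every product of finite chains arises this way by choosing sufficiently many distinct primes. Thus the two classes of lattices coincide, and the corollary follows. I do not anticipate any serious obstacle: given \Cref{secondmainresult}, the argument is dictionary-chasing, with the only mildly subtle point being the classical fact that Birkhoff's duality intertwines lattice opposition with poset opposition.
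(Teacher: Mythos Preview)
Your proposal is correct and follows essentially the same route as the paper's proof: reduce via \Cref{pureAreUpwardLinear} to the condition that both $P$ and $P^{op}$ are upward-linear, observe this forces $P$ to be a disjoint union of chains, and identify the resulting lattices with divisor lattices. Your write-up is slightly more explicit (justifying $L^{op}\cong J(P^{op})$ and spelling out $J(\bigsqcup C_{a_i})\cong\prod C_{a_i+1}$), but the argument is the same.
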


The proof of the second main result, \Cref{secondmainresult}, leads us to the study of when an indecomposable injective module is perfect for an incidence algebra of a distributive lattice. In \cite{IM}, it was noted that indecomposable injective modules over incidence algebras $KL$ for general lattices are so-called \emph{antichain modules} and a general canonical projective resolution for antichain modules was established in \cite{IM}. This naturally leads to the following more general question:
\begin{question}
Let $L$ be a lattice. When is an antichain module $M$ over $KL$ perfect?
\end{question}

We give a complete answer to this question under a minimality assumption on the canonical resolution of an antichain module. We also refer to this resolution as the antichain resolution of the module. We refer to the main text for the combinatorial definition of strong. 
\begin{theorem} (\Cref{lem::strong_iff_resolution_minimal} and \Cref{prop::perfect-iff-Boolean})
Let $L$ be a lattice with antichain module $M$.
The antichain resolution of $M$ is minimal if and only if it is strong.
In this case, $M$ is perfect if and only if it is Boolean.
\end{theorem}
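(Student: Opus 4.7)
The plan is to prove the two equivalences in turn. For the first, I would work directly with the explicit description of the canonical antichain resolution given earlier in the paper. This resolution is built combinatorially from the antichain datum, with indecomposable summands of each $P^i$ indexed by certain sub-antichain configurations and differentials given by signed sums of maps induced by covering relations among these configurations. Minimality of a projective resolution over a finite-dimensional algebra is equivalent to the differentials lying in the radical, i.e., no component of any differential is an isomorphism between indecomposable summands. My approach is to translate this algebraic statement into combinatorics: an identity component of a differential corresponds exactly to a pair of consecutive sub-antichain configurations whose associated projectives agree and whose connecting map is the identity, and the \emph{strong} condition should be precisely the ruling out of such coincidences. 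I expect the proof to reduce to an inspection of the indexing sets of consecutive resolution terms together with a verification that this condition matches \emph{strong} term by term.

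For the second equivalence, assume the antichain resolution is minimal. Then $\pdim M$ equals the length of the resolution, a transparent combinatorial invariant of the antichain. To compute $\grade M$, I would apply $\Hom_{KL}(-,KL)$ to the minimal resolution and locate the first non-vanishing cohomology. Since $KL$ is a basic incidence algebra decomposing as a sum of indecomposable projectives, $\Hom_{KL}(P^i,KL)$ admits an explicit description indexed by the same sub-antichain data, so the dual complex is again combinatorially transparent. Perfection is the statement that the entire cohomology of this dual complex is concentrated in degree $\pdim M$. My plan is to show, by inspecting the dualized complex degree by degree, that the \emph{Boolean} condition is the combinatorial symmetry which forces exactly this concentration; conversely, if the antichain is not Boolean, a strictly smaller sub-antichain produces a non-trivial contribution in some lower $\Ext^i(M,KL)$, witnessing $\grade M < \pdim M$.

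The hard part will be the grade computation. The projective dimension can be read off directly from the minimal resolution, but the grade requires a genuine cohomological evaluation of $\RHom_{KL}(M,KL)$, and this is where the \emph{Boolean} hypothesis must be leveraged carefully. The cleanest strategy I anticipate is to describe both $\pdim M$ and $\grade M$ in terms of chain lengths inside a combinatorial poset of sub-antichains associated with $M$, so that perfection becomes the statement that this poset is self-dual in the appropriate sense — a property one then identifies with being Boolean. Matching this combinatorial self-duality precisely to the Boolean condition, and ruling out accidental cancellations in the dual complex outside the Boolean case, is where the technical work concentrates.
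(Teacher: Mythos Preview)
Your plan for the first equivalence matches the paper exactly: minimality fails precisely when some component of a differential is the identity on an indecomposable projective, which happens iff $\vee(S'\cup\{x\}) = \vee S'$ for some $S' \subseteq C$ and $x \in C\setminus S'$, and this is the negation of strong.

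For the second equivalence, your overall strategy---dualize the minimal resolution against each $P(y)$ and analyze the combinatorics---is correct, and your instinct that in the non-Boolean case a proper subset of $C$ should witness a lower nonvanishing $\Ext$ is exactly right. But the heuristic you settle on in your final paragraph is a wrong turn. You propose that perfection should correspond to the ``poset of sub-antichains'' being self-dual; however, under the strong hypothesis the map $S \mapsto \vee S$ is already an order isomorphism from $2^C$ onto its image (strong says $\vee S \le \vee S' \Leftrightarrow S \subseteq S'$), so that poset is \emph{always} the Boolean lattice $2^C$ and hence always self-dual. The Boolean condition on $C$ is not about the intrinsic order on $\{\vee S : S\subseteq C\}$ but about how \emph{meets in $L$} interact with these joins, and this cannot be read off from the abstract sub-antichain poset.

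The paper supplies the two concrete ingredients your outline lacks. First, a reformulation: $C$ is Boolean iff $\vee S = \bigwedge_{x \in C\setminus S}\vee(S\cup\{x\})$ for every $S$ with $|S| \le |C|-2$. Second, when this fails at some $S$ of size $r$, one sets $y := \bigwedge_{x \in C\setminus S}\vee(S\cup\{x\}) > \vee S$; strongness then forces $\{S' \subseteq C : \vee S' \ge y\} = \{S' : S \subsetneq S'\}$, so $\Hom(P_\bullet,P(y))$ is the cochain complex of a simplex with its bottom cell deleted, which has cohomology in degree $r+1 < |C|$. This gives $\grade M \le r+1 < |C| = \pdim M$. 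For the Boolean direction the paper cites an external computation \cite[Theorem~2.2.3]{Go}; the underlying point is that in the Boolean case each nonempty $E_y := \{S : \vee S \ge y\}$ is closed under intersection (via $\vee(S\cap S') = \vee S \wedge \vee S'$), hence a full simplex with trivial cohomology unless $E_y = \{C\}$, forcing all $\Ext$ into degree $|C|$. The construction of the specific witness $y$ from the failed meet identity is the idea you are missing; the self-duality framing will not lead you to it.
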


\section{Preliminaries}
In the following, we always assume that our algebras are finite dimensional $K$-algebras over a field $K$. Modules will be finitely generated right modules unless otherwise stated. The functors $\Hom$ and $\Ext$ will refer to those of this category. 
Let $\mathcal{P}$ be a finite poset. Then the \emph{incidence algebra} $K\mathcal{P}$ of $\mathcal{P}$ is defined as the quiver algebra $KQ_{\mathcal{P}}/I$, where $Q_{\mathcal{P}}$ is the Hasse diagram of $\mathcal{P}$ and the relations $I$ are given by all commutativity relations of the form $w_1-w_2$ where $w_1$ and $w_2$ are paths of length $\geq 2$ starting and ending at the same points.
For any $x \leq y\in \mathcal{P}$ we denote $p_x^y$ the equivalence class of paths from $x$ to $y$ in $K\mathcal P$. Denote $S(x)$ the simple module associated to $x$. Its projective cover, denoted  $P(x)$, has as basis the set $\{p_{x}^y | x\leq y\}$. Dually, the injective indecomposable module $I(y) = D(Ae_y)$ has as basis the elements $\{D(p_x^y) | x\leq y\}$ where $D$ denotes the vector space dual and the dual basis element, respectively.
We refer for example to \cite{S,Sta} for more information on incidence algebras and their use in combinatorics and representation theory.
Recall that $\mathcal{P}$ is a \emph{lattice} if for every two elements $x,y \in \mathcal{P}$ there is a unique join denoted by $x \lor y$, which is by definition the smallest element bigger than $x$ and $y$ in $\mathcal{P}$, and dually there is a unique meet $x \land y$. Note that every finite lattice has a unique minimum element, denoted as $m,$ and a unique maximum element, denoted as $M.$
A lattice $L$ is called \emph{distributive} if the distributivity law holds for all $x,y,z \in L: (x \land y) \lor z =(x \lor z) \land (y \lor z)$.
A subset $O$ of a poset $\mathcal{P}$ is called an \emph{order ideal} if for all $y \in O$, we also have $x \in O$ if $x \leq y$.
Every subset $A$ of $\mathcal{P}$ induces an order ideal $O$ generated by $A$ simply by taking $O$ to be all elements $x \in \mathcal{P}$ such that there is an $a \in A$ with $x \leq a$.
An \emph{antichain} $C$ in $\mathcal{P}$ is a subset of $\mathcal{P}$ such that any two elements $x,y \in C$ are not comparable.
Note that order ideals are in bijection with antichains in $\mathcal{P}$, where the bijection is given by associating to an order ideal $O$ the minimal antichain that generates $O$. The set of order ideals forms a lattice in a natural way, where the order is given by set inclusion, the meet is given by the usual intersection, and the join is given by the usual union of sets. Recall that in a lattice $L$ a $y\in L$ is called \textit{join-irreducible}, if $a \vee b=y $ for $a,b\in L$ implies $a=y$ or $b=y$.
A central result in the theory of distributive lattices is Birkhoff's representation theorem (see for example \cite[Chapter 3]{Sta} for this theorem and more information on lattice theory) that gives a correspondence between finite posets and distributive lattices:
\begin{theorem}[Birkhoff's representation theorem]
Any finite distributive lattice is isomorphic to the lattice of order ideals of its poset of join irreducible elements. The correspondence sending a poset to its distributive lattice of order ideals and sending a distributive lattice to its poset of join-irreducible elements is a bijection.
\end{theorem}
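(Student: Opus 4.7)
The plan is to construct explicit mutually inverse correspondences between finite distributive lattices and finite posets. Given a finite distributive lattice $L$, write $J(L)$ for its poset of join-irreducible elements and define $\phi_L \colon L \to \mathcal{O}(J(L))$ by $\phi_L(x) = \{p \in J(L) : p \leq x\}$; this is manifestly an order ideal of $J(L)$. Conversely, for a finite poset $P$, the preliminaries already note that $\mathcal{O}(P)$ is a distributive lattice under union and intersection. The theorem requires proving two things: (i) $\phi_L$ is a lattice isomorphism for every finite distributive lattice $L$, and (ii) the assignment $x \mapsto (x] := \{y \in P : y \leq x\}$ yields an isomorphism of posets $P \cong J(\mathcal{O}(P))$ for every finite poset $P$.

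For (i), meet-preservation $\phi_L(x \wedge y) = \phi_L(x) \cap \phi_L(y)$ is immediate from the definition. Join-preservation is precisely where distributivity enters: if $p \in J(L)$ and $p \leq x \vee y$, then $p = p \wedge (x \vee y) = (p \wedge x) \vee (p \wedge y)$, and join-irreducibility of $p$ forces $p = p \wedge x$ or $p = p \wedge y$, i.e.\ $p \leq x$ or $p \leq y$. Next I would prove by induction on $|\{z \in L : z \leq x\}|$ that $x = \bigvee \phi_L(x)$: if $x$ is join-irreducible this is clear, otherwise $x = a \vee b$ with $a, b < x$, and the inductive hypothesis applied to $a$ and $b$ shows that $x$ is a join of elements of $\phi_L(x)$. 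Injectivity of $\phi_L$ is then immediate since $\phi_L(x) = \phi_L(y)$ forces $x = \bigvee \phi_L(x) = \bigvee \phi_L(y) = y$. Surjectivity follows by observing that any order ideal $O \subseteq J(L)$ is recovered as $O = \phi_L(\bigvee O)$: the inclusion $O \subseteq \phi_L(\bigvee O)$ is trivial, while for the reverse inclusion, any $p \in J(L)$ with $p \leq \bigvee O$ must by iterated join-preservation satisfy $p \leq q$ for some $q \in O$, hence $p \in O$ since $O$ is downward closed.

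For (ii), each principal ideal $(x]$ is join-irreducible in $\mathcal{O}(P)$ because a decomposition $(x] = A \cup B$ with $A, B$ order ideals forces $x \in A$ or $x \in B$, hence $A = (x]$ or $B = (x]$. Conversely, any nonempty order ideal $O$ equals $\bigcup_{x \in O} (x]$, so if $O$ is join-irreducible it must coincide with some $(x]$. The map $x \mapsto (x]$ is clearly order-preserving and order-reflecting, yielding the desired isomorphism $P \cong J(\mathcal{O}(P))$. The main obstacle throughout is pinpointing the unique place where the distributive law is essential, namely the join-preservation step in (i); without it, $\phi_L$ remains an injective meet-preserving map but can fail to be surjective, which is consistent with the fact that $\mathcal{O}(J(L))$ is always distributive, so the correspondence cannot possibly extend beyond the distributive setting.
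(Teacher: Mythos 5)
The paper does not prove this statement at all: it is quoted as a classical result with a pointer to \cite{Sta}, so there is no in-house argument to compare against. Your proof is correct and is essentially the standard textbook argument (the one found in the cited reference): the map $\phi_L(x)=\{p\in J(L): p\le x\}$, meet-preservation for free, join-preservation via distributivity plus join-irreducibility, the induction showing $x=\bigvee\phi_L(x)$, and the identification of the join-irreducibles of $\mathcal{O}(P)$ with the principal ideals $(x]$. You also correctly isolate distributivity as entering only in the join-preservation step, which is the right conceptual remark.

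Two small points worth tightening. First, your induction ``otherwise $x=a\vee b$ with $a,b<x$'' silently skips the bottom element $m$ when $m$ is not join-irreducible; there one needs the convention $\bigvee\emptyset=m$ so that $\phi_L(m)=\emptyset$ maps correctly. Second, and related: the paper's stated definition of join-irreducible (``$a\vee b=y$ implies $a=y$ or $b=y$'') vacuously includes the minimum $m$, under which the theorem as literally stated fails (already for the one-element lattice, since $\mathcal{O}(\{m\})$ has two elements). Your argument implicitly, and correctly, uses the standard convention that the bottom element is excluded from $J(L)$; you should state that explicitly so the reader sees you are not using the paper's literal definition.
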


We now recall some homological notions for finite dimensional algebras. For an introduction to homological algebra and representation theory of finite dimensional algebras we refer for example to \cite{ARS} and \cite{ASS}.
A finite dimensional algebra $A$ is called \emph{Auslander-Gorenstein} if there is a finite minimal injective coresolution of the regular module 
$$0 \rightarrow A \rightarrow I^0 \rightarrow I^1 \rightarrow \cdots \rightarrow I^n \rightarrow 0$$
such that $\pdim I^i \leq i$ for all $i \geq 0$.
An Auslander-Gorenstein algebra is called \emph{Auslander regular} if it additionally has finite global dimension. Note that all incidence algebras have finite global dimension since they are quiver algebras with an acyclic quiver.
A fundamental connection between order theory and the homological algebra of incidence algebras was established in \cite{IM} with the following theorem:
\begin{theorem} \label{theoremausregiffdistr}
Let $L$ be a finite lattice. Then the incidence algebra $KL$ is Auslander regular if and only if $L$ is distributive.
\end{theorem}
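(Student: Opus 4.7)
Since the Hasse quiver of $KL$ is acyclic, $KL$ automatically has finite global dimension, so being \emph{Auslander regular} reduces to verifying $\pdim I^i \le i$ for every term in the minimal injective coresolution of $KL$, equivalently that $\Ext^j_{KL}(\Ext^i_{KL}(S,KL), KL)=0$ for every simple $S$ and every $j<i$. My plan is to attack the two directions of the theorem through explicit computations of minimal projective resolutions of the simple $KL$-modules $S(x)$.

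For \emph{distributive $\Rightarrow$ Auslander regular}, I would invoke Birkhoff's theorem to realize $L$ as the order-ideal lattice of its poset $P$ of join-irreducibles. The structural feature to exploit is that every interval $[x,y]$ in a distributive lattice is again distributive, and the covers of $x$ in $L$ behave like atoms of a Boolean lattice: distinct subsets of covers have distinct joins, and every element above $x$ is uniquely a join of such covers. This makes it possible to construct a minimal projective resolution of $S(x)$ as a Koszul-type cubical complex whose $i$-th term is a direct sum of indecomposable projectives $P(z)$ indexed by $i$-element antichains of covers of $x$. From this explicit resolution one reads off $\Ext^i_{KL}(S(x), KL)$ and checks by a direct combinatorial computation on $P$ that each composition factor has grade at least $i$, which is precisely the Auslander condition.

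For the converse, I would argue contrapositively. If $L$ is not distributive, classical lattice theory produces a sublattice isomorphic to $M_3$ or $N_5$, and after suitable localization one locates such a configuration as an interval $[x,y]$ of $L$. The plan is to compute the first few terms of the minimal projective resolution of $S(y)$ and to exhibit a syzygy with a composition factor $S(z)$ of insufficient grade, producing an explicit nonvanishing $\Ext^j_{KL}(\Ext^i_{KL}(S(y), KL), KL)$ with $j<i$; a direct case analysis on the two small non-distributive lattices then closes the contradiction. The main obstacle will be propagating this local obstruction to the global algebra: the minimal projective resolution of $S(y)$ over $KL$ does not automatically restrict to one over the incidence algebra of $[x,y]$, so one must either use an idempotent truncation argument (exploiting that $e(KL)e$-modules embed into $KL$-modules with controlled Ext behaviour) or give a direct construction exhibiting the grade-violating Ext class inside $KL$ itself.
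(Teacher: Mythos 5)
First, be aware that this theorem is not proved in the paper at all: it is quoted from Iyama--Marczinzik \cite{IM}, and the article only recalls the central tool of that proof, namely the explicit projective resolution (\ref{eq::antichain_resolution}) of antichain modules and its minimality for indecomposable injectives (\Cref{thm::min_resolution_injectives_distributive}). Your forward direction is in the same spirit as that machinery: the ``Koszul-type cubical complex'' you describe for $S(x)$, indexed by subsets of $\cov(x)$, is precisely the antichain resolution of $S(x)$ regarded as the antichain module of the antichain $\cov(x)$ inside the up-set $[x,M]$, and the combinatorial input making it minimal is that in a distributive lattice distinct subsets of $\cov(x)$ have distinct joins, i.e.\ $[x,\vee\cov(x)]$ is Boolean. (Your stronger assertion that \emph{every} element above $x$ is a join of covers of $x$ is false --- take a chain of length two --- but it is not needed.) The step that carries the homological content is left entirely unproved: identifying the composition factors of $\Ext^i_{KL}(S(x),KL)$ and bounding their grades below by $i$ requires the formula $\grade S(y)=|\cov(y)|$, which is itself one of the main computations of \cite{IM} and cannot simply be ``read off''. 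Note also that the reduction of the Auslander condition to simple modules is safe in the form ``every submodule of $\Ext^i(M,A)$ has grade at least $i$'' (this class of modules is closed under extensions, so one can d\'evissage on $M$), but not obviously in the double-$\Ext$-vanishing form you state, since $\Ext^j(-,A)=0$ does not pass to quotients.

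The converse direction contains a genuine error on top of the gap you yourself flag. A finite non-distributive lattice contains $M_3$ or $N_5$ as a \emph{sublattice}, but in general not as an \emph{interval}, and no ``localization'' produces one: for $L=M_4$ (four pairwise incomparable elements between $m$ and $M$) every proper interval is a chain of length at most one and $L$ itself is neither $M_3$ nor $N_5$, yet $L$ is not distributive. So the intended ``direct case analysis on the two small non-distributive lattices'' has nothing to apply to; at best one can take the interval spanned by the bottom and top of the forbidden sublattice, but that interval contains extra elements and its incidence algebra is not one of the two model cases. Independently, the obstacle you do acknowledge --- transporting a nonvanishing $\Ext^j(\Ext^i(-,B),B)$ with $j<i$ from $B=K[x,y]$ to the corresponding statement over $KL$ --- is real and unresolved: the idempotent truncation $e(KL)e$ controls homomorphisms among projectives supported on $[x,y]$, but $\Hom(-,KL)$, and hence grade, does not restrict along it in any automatic way. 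As it stands the converse is a programme rather than a proof; for the argument actually used, one has to consult \cite{IM}.
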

We refer to \cite{IM} for a proof and more information such as a formula for the global dimension of incidence algebras of distributive lattices.

A central tool for the proof of the previous theorem is the explicit description of a minimal projective resolution of indecomposable injective modules and, more generally, a projective resolution of antichain modules.

Let $C=\{x_1,\ldots, x_\ell\}$ be an antichain in the finite lattice $L$, and let $A=KL.$ We associate an $A$-module $N_C$ to $C$ as follows: Let $N_C$ be the right ideal of $A$ generated by the elements $p^m_{x_i},$ i.e.  
$$N_C=\sum_{i=1}^\ell p_{x_i}^mA.$$ Here, $p^m_{x_i}$ is the element of $A$ corresponding to the unique path from $m$ to $x_i$ in $A.$ Note that $N_C$ is a submodule of the unique projective-injective $A$-module $P(m)=e_mA\cong I(M)=D(Ae_M).$ In fact, this gives us a bijection between the antichains of $L$ and the submodules of $P(m),$ see \cite[Proposition 2.1]{IM}. Then the \textit{antichain module} associated to $C$ is defined to be $$M_C=P(m)/N_C.$$ Note that a module is an antichain module if and only if it is a factor module of $P(m).$ Then by \cite[Theorem 2.2]{IM}, we can explicitly describe a projective resolution of the antichain module $M_C$ as follows:

\begin{equation}
    \label{eq::antichain_resolution}
    0\rightarrow P_\ell \xrightarrow{\partial_\ell} \ldots \xrightarrow{\partial_2} P_1 \xrightarrow{\partial_1} P_0 \xrightarrow{\partial_0} M_C \rightarrow 0 \text{ with }P_0=P(m)\text{ and } P_r=\bigoplus_{S\subseteq C, |S|=r}P(\vee S). 
\end{equation}
The boundary maps are defined by choosing an arbitrary ordering of the antichain $C$ and taking a linear combination of canonical maps $P(\vee S) \to P(\vee (S\setminus \{x\}))$ according to the Koszul sign convention. For more details, see \cite[Theorem 2.2]{IM}.
We also refer to (\ref{eq::antichain_resolution}) as the \textit{antichain resolution} of $M_C$. 
In general, this resolution is not minimal. In \Cref{lem::strong_iff_resolution_minimal} we show that the resolution is minimal if and only if $C$ is a so-called \textit{strong} antichain. 

An important special case of antichain modules that we are interested in this paper is indecomposable injective modules. Indeed, every indecomposable injective is a factor module of $P(m)$, and the indecomposable injective $I(x)=M_C$ for the antichain $C=\min ([m,x]^c)$, i.e., the set of minimal elements of $L$ outside the interval $[m,x].$  It was proven in \cite[Proposition 3.1]{IM} that if $L$ is distributive, all elements of  $\min ([m,x]^c)$ are join-irreducible. A main result of \cite{IM} establishes that in case $L$ is distributive, the antichain resolution (\ref{eq::antichain_resolution}) of an indecomposable injective $I(x)$ is always minimal.

\begin{theorem}\cite[Theorem 3.2]{IM}
\label{thm::min_resolution_injectives_distributive}
    Let $L$ be a distributive lattice and $x\in L.$ Then (\ref{eq::antichain_resolution}) defines a minimal projective resolution for $I(x)\cong M_C$ where $C=\min([m,x])^c.$
    
    Moreover, $\pdim I(x)=|\cov(x)|.$
\end{theorem}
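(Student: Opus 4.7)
The projective resolution (\ref{eq::antichain_resolution}) and the identification $I(x)\cong M_C$ are already available for an arbitrary finite lattice from \cite[Proposition 2.1, Theorem 2.2]{IM}. What remains, and what genuinely uses distributivity, is to verify (a) that this resolution is \emph{minimal} and (b) that its length equals $|\cov(x)|$. The plan is to handle the two points separately.

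For minimality, observe that each summand of the boundary $\partial_r$ is a canonical morphism between indecomposable projectives $P(\vee S)\to P(\vee(S\setminus\{s\}))$, and any such canonical morphism lies in the Jacobson radical if and only if $\vee(S\setminus\{s\}) < \vee S$, equivalently $s\not\leq \vee(S\setminus\{s\})$. It therefore suffices to establish this strict inequality for every non-empty $S\subseteq C$ and every $s\in S$. I would combine two facts specific to the distributive setting: first, by \cite[Proposition 3.1]{IM} every element of $C=\min([m,x]^c)$ is join-irreducible in $L$; second, in a distributive lattice a join-irreducible $j$ satisfies $j\leq \vee T$ only when $j\leq t$ for some $t\in T$ (the ``prime'' property of join-irreducibles, which in fact characterises distributivity among finite lattices). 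If $s\leq \vee(S\setminus\{s\})$ held, the prime property would force $s\leq s'$ for some $s'\in S\setminus\{s\}$, contradicting that $C$ is an antichain. Hence every component of $\partial_r$ is radical and the antichain resolution is minimal.

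For the formula $\pdim I(x)=|\cov(x)|$, once minimality is in hand the projective dimension equals the length $\ell=|C|$, so one only needs a bijection between $C$ and the set of upper covers of $x$. Using Birkhoff's theorem write $L\cong J(P)$ with $P$ the poset of join-irreducibles, and identify $x$ with the order ideal $I_x=\{j\in P:j\leq x\}$. The map I would use is $c\mapsto c\vee x$: if $c\in C$, minimality of $c$ in $[m,x]^c$ forces every $j<c$ in $P$ to lie in $I_x$, so $I_x\cup\{c\}$ is again an order ideal and it covers $I_x$ in $J(P)$; translating back, $c\vee x$ covers $x$ in $L$. Conversely, any cover $y$ of $x$ corresponds to $I_x$ enlarged by a single element $c\in P$, and verifying $c\in C$ reduces once more to the same order-ideal condition on $I_x\cup\{c\}$. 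This yields $|C|=|\cov(x)|$ and hence the projective-dimension formula.

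The main obstacle is the minimality step: without distributivity one can have $\vee(S\setminus\{s\})=\vee S$ even for a perfectly reasonable antichain $C$, so the canonical resolution genuinely splits off non-radical summands and fails to be minimal. Everything else is essentially formal from Birkhoff's representation theorem and the general structure of antichain resolutions recalled in the preliminaries.
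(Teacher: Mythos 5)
Your argument is correct, but note that the paper itself offers no proof of this statement: it is quoted verbatim from \cite[Theorem 3.2]{IM}, so there is no in-text argument to compare against. What you have written is a legitimate self-contained derivation, and it is consistent with the machinery the paper develops elsewhere. Your minimality step is exactly the ``strong antichain'' criterion of \Cref{lem::strong_iff_resolution_minimal} specialised to $C=\min([m,x]^c)$: since the components of $\partial_r$ are the canonical maps $P(\vee S)\to P(\vee(S\setminus\{s\}))$, minimality reduces to $\vee(S\setminus\{s\})<\vee S$, and your combination of \cite[Proposition 3.1]{IM} (the elements of $C$ are join-irreducible) with the primeness of join-irreducibles in a finite distributive lattice (if $s\leq\vee(S\setminus\{s\})$ then $s\leq s'$ for some $s'\in S\setminus\{s\}$, contradicting the antichain property) closes this correctly, including the singleton case $S=\{s\}$ where one only needs $s\neq m$. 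Your count $|C|=|\cov(x)|$ via Birkhoff --- identifying $C$ with the minimal elements of $\mathcal{P}\setminus O(x)$ and these with the covers $O(x)\cup\{p\}$ of $O(x)$ in the order-ideal lattice --- is the same computation the authors use inside the proof of $(3)\Rightarrow(4)$ of \Cref{pureAreUpwardLinear}. So while I cannot certify that this is how \cite{IM} argues, your proof is complete and fits naturally with the paper's own toolkit.
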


There exists a dual statement which provides a minimal injective resolution for every indecomposable projective $KL$-module in the case when $L$ is distributive, see \cite[Theorem 3.2]{IM}.

We will need the following lemma:
\begin{lemma} \label{Bensonlemma}
Let $A$ be a finite dimensional algebra, $N$ an indecomposable $A$-module, and $S$ a simple $A$-module. Let $(P_i)_i$ be a minimal projective resolution of $N$ and $(I^i)_i$ a minimal injective coresolution of $N$. 
\begin{enumerate} 
\item For $l \geq 0$, $\Ext_A^{l}(N,S) \neq 0$ iff $S$ is a quotient of $P_l$.  \item For $l \geq 0$, $\Ext_A^{l}(S,N) \neq 0$ iff $S$ is a submodule of $I^l$. \end{enumerate} 
\end{lemma}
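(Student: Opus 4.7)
The plan is to prove (1) and (2) by computing the $\Ext$ groups directly from the given minimal resolutions and showing that the induced differentials vanish, so that the cohomology is just the relevant $\Hom$ space. Both parts are classical and dual to each other, so I would present (1) in detail and remark that (2) follows by dualization (or by the symmetric argument using injective envelopes).

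For (1), I would compute $\Ext_A^l(N,S)$ as the $l$-th cohomology of the complex $\Hom_A(P_\bullet, S)$. The key input is that in a minimal projective resolution, every differential $d_i \colon P_i \to P_{i-1}$ has image contained in $\rad(P_{i-1})$. Given any $f \in \Hom_A(P_{i-1}, S)$, the composition $f \circ d_i$ factors through $\rad(P_{i-1})$; but since $S$ is simple, $f$ must vanish on $\rad(P_{i-1})$, so $f \circ d_i = 0$. Hence every differential in $\Hom_A(P_\bullet, S)$ is zero, and therefore $\Ext_A^l(N, S) = \Hom_A(P_l, S)$. The latter is nonzero if and only if some simple quotient of $P_l$ is isomorphic to $S$, i.e.\ $S$ appears in $\top(P_l)$, which is the same as saying $S$ is a quotient of $P_l$.

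For (2), the dual argument proceeds as follows. I compute $\Ext_A^l(S, N)$ as the $l$-th cohomology of $\Hom_A(S, I^\bullet)$. Minimality of the injective coresolution means that for each $i \geq 0$, the module $I^i$ is the injective envelope of $\ker(I^i \to I^{i+1})$ (equivalently, the cokernel of the previous map embeds essentially into $I^{i+1}$). In particular, the inclusion $\ker(d^i) \hookrightarrow I^i$ is essential, which forces $\soc(I^i) \subseteq \ker(d^i)$. Since any morphism $f \colon S \to I^i$ factors through $\soc(I^i)$, we get $d^i \circ f = 0$, so the differentials on $\Hom_A(S, I^\bullet)$ vanish. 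Therefore $\Ext_A^l(S, N) = \Hom_A(S, I^l)$, and this is nonzero precisely when $S$ is isomorphic to a simple submodule of $\soc(I^l)$, i.e.\ when $S$ is a submodule of $I^l$.

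There is no real obstacle: the statement is a standard consequence of the two defining properties of minimal resolutions (differentials land in the radical; cosyzygies embed essentially into the next injective term). The only mild care required is to phrase the socle-essentiality condition correctly for the injective case, since minimality of an injective coresolution is perhaps less familiar than for projective resolutions; I would therefore spell out the essential-embedding step explicitly to keep the exposition self-contained. The hypothesis that $N$ be indecomposable is in fact not used in the proof, so I would state that the lemma holds for arbitrary $N$ and use it in that generality (the hypothesis is only included because of how the lemma is applied in the sequel).
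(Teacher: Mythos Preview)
Your argument is correct and is the standard proof of this classical fact. The paper itself does not give a proof at all: it simply cites \cite[Corollary 2.5.4]{Ben}. Your write-up is thus more self-contained than the paper's treatment, and your observation that the indecomposability hypothesis on $N$ is unnecessary is also correct.
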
 
\begin{proof}
See \cite[Corollary 2.5.4]{Ben}
\end{proof}

\section{Pure modules and algebras}
The next definition is due to \cite[Definition 0.3]{ASZ}, specialised to the situation of finite dimensional algebras. See also \cite{C} and \cite{ASZ2} for more information and examples for noetherian rings.
\begin{definition} \label{maindefinition}
Let $A$ be an Iwanaga-Gorenstein algebra with selfinjective dimension $d$.
An $A$-module $M$ is called \emph{pure} if $\grade(N)=\grade(M)$ for all non-zero submodules $N$ of $M$. We say that $A$ has a \emph{pure minimal injective coresolution} if in the minimal injective coresolution of the regular module 
$$0 \rightarrow A \rightarrow I^0 \rightarrow I^1 \rightarrow \cdots \rightarrow I^d \rightarrow 0$$
each module $I^i$ is pure with $\grade I^i=i$

\end{definition}
It is quite complicated to see directly from the definition whether a given algebra has a pure minimal injective coresolution.
We just give a simple example:
\begin{example}
Let $A=KQ$ be the path algebra of an acyclic quiver $Q$.
Then every indecomposable non-projective $A$-module $M$ satisfies $\Hom_A(M,A)=0$.
First, assume $Q$ is not of linearly oriented Dynkin type $A_n$. Then there is no non-zero projective-injective $A$-module and thus the zeroth term $I^0$ in the minimal injective coresolution 
$$0 \rightarrow A \rightarrow I^0 \rightarrow I^1 \rightarrow 0$$
has no projective direct summands. Thus, $\grade I^0=1$ and $A$ has no minimal pure injective coresolution.
Next, let $Q$ be linearly oriented of Dynkin type $A_n$:
$Q=$
\[\begin{tikzcd}
	1 & 2 & \ldots & {n-1} & n
	\arrow[from=1-1, to=1-2]
	\arrow[from=1-2, to=1-3]
	\arrow[from=1-3, to=1-4]
	\arrow[from=1-4, to=1-5]
\end{tikzcd}\]
Let $I(k)$ denote the indecomposable injective module corresponding to vertex $k$. Note that $I(k)$ is projective if and only if $k=n$.
Then the minimal injective coresolution of $A$ looks as follows:
$$0 \rightarrow A \rightarrow I(n)^n \rightarrow \bigoplus\limits_{k=1}^{n-1}{I(k)} \rightarrow 0.$$
Now every submodule $X$ of $I(n)^n$ satisfies $\Hom_A(X,A) \neq 0$ since $\Hom_A(X,I(n)^n) \neq 0$ and $I(n)$ is a submodule of $A$. Thus every submodule of $I(n)^n$ has grade equal to zero and $I(n)^n$ is pure of grade 0. 
Every submodule $Y$ of $\bigoplus\limits_{k=1}^{n-1}{I(k)}$ has a non-projective injective envelope and thus has no projective direct summands (as all projective modules embed into a power of $I(n)$). Thus every such submodule $Y$ of $\bigoplus\limits_{k=1}^{n-1}{I(k)}$ has grade equal to 1 and $\bigoplus\limits_{k=1}^{n-1}{I(k)}$ is pure of grade 1.
\end{example}

In the rest of this section, we classify when Auslander regular algebras have a pure minimal injective coresolution.
\begin{proposition} \label{lemmapure}
Let $A$ be an Auslander regular algebra and assume that $A$ has a pure minimal injective coresolution. Then:
\begin{enumerate}
    \item $A$ is right diagonal Auslander regular.
    \item All indecomposable injective right $A$-modules are perfect.
\end{enumerate}
\end{proposition}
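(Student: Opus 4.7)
The plan is to derive both parts from two standard facts about Auslander regular algebras: the bound $\pdim I^i \le i$ built into the definition, and the Auslander-type inequality $\grade M \le \pdim M$ for every nonzero finitely generated module $M$ (which for a finite-dimensional algebra follows from $A$ being a projective generator, so that $\Ext^*(M,A)$ cannot vanish identically in $[0,\pdim M]$). Both parts then reduce to a three-line sandwich.

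I would first prove (1). Take an indecomposable direct summand $X$ of $I^i$; because $I^i$ is injective, $X$ is itself an indecomposable injective module and is in particular a nonzero submodule of $I^i$. Purity of $I^i$ forces $\grade X = i$, while direct summands satisfy $\pdim X \le \pdim I^i \le i$. Combining this with $\grade X \le \pdim X$ sandwiches $\pdim X$ between $i$ and $i$, giving $\pdim X = i$. This is precisely the right diagonal condition on $A$.

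Then I would bootstrap (1) to (2) by checking that every indecomposable injective $I(S)$ actually occurs as a direct summand of some term of the minimal injective coresolution of $A$. For each simple $S$ the grade $\grade S$ is finite (again because $A$ is a projective generator over an algebra of finite global dimension, so $\Ext^l(S,A) \neq 0$ for some $l \le \pdim S$). Applying \Cref{Bensonlemma} summand-by-summand to the indecomposable projective decomposition of the regular module $A$ identifies $I(S)$ as a direct summand of $I^{\grade S}$. The argument of (1) applied to $X = I(S)$ inside $I^{\grade S}$ then yields $\pdim I(S) = \grade S$, and purity gives $\grade I(S) = \grade S$ as well, so $I(S)$ is perfect.

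The only step that is not a formal manipulation of the two Auslander regular inequalities is this visibility statement: without knowing that every indecomposable injective occurs in the coresolution, one could at best conclude perfection for those indecomposable injectives that actually appear. This is the piece I expect to require the most care, but it is handled cleanly by combining finite global dimension with \Cref{Bensonlemma}.
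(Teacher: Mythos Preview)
Your proof is correct and follows essentially the same approach as the paper's: both parts rest on the sandwich $i=\grade X\le\pdim X\le\pdim I^i\le i$ for a direct summand $X$ of $I^i$, and part~(2) is reduced to~(1) by showing that every indecomposable injective appears as a summand of some $I^i$ via \Cref{Bensonlemma} together with finite global dimension. The only cosmetic difference is that you pin down the specific index, namely $I(S)\mid I^{\grade S}$, whereas the paper is content with the existence of some $i$; this extra precision is harmless but not needed for the conclusion.
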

\begin{proof}
Let 
$$0 \rightarrow A \rightarrow I^0 \rightarrow I^1 \rightarrow \cdots \rightarrow I^n \rightarrow 0$$ be a minimal injective coresolution of $A$.
We first show (1):
 We have to show that every direct summand $X$ of $I^i$ has projective dimension equal to $i$. But $X$ is a submodule of $I^i$ and the pureness assumption directly gives us $\grade(X)=i$. Since $X$ is a direct summand of $I^i$ and since $A$ is Auslander regular, we have  that $\pdim X \leq \pdim I^i \leq i$.
    Thus $i = \grade X \leq \pdim X \leq i$ and therefore $\pdim X=i$ and $X$ is perfect. Now (2) follows since every indecomposable injective right $A$-module is a direct summand of some $I^i$.
To see this, note that by Lemma \ref{Bensonlemma} (2), this is equivalent to the existence of some $i_S \geq 0$ such that $\Ext_A^{i_S}(S,A) \neq 0$ for every simple $A$-module $S$. But for a module $M$ of finite projective dimension $g$, we have in general that $\Ext_A^g(M,A) \neq 0$ and every $A$-module $M$ has finite projective dimension since $A$ is Auslander regular by assumption.
Thus all indecomposable injective right $A$-modules are direct summands of some $I^i$ and thus are perfect.
\end{proof}

We also need the next result, which is a rather deep result about Auslander regular algebras whose proof relies on spectral sequences:
\begin{theorem} \label{Bjoerklemma}
Let $A$ be an Auslander regular algebra and $M$ an $A$-module. Then $M$ has the property that $\grade N= \grade M$ for every non-zero submodule $N$ of $M$ if and only if $\Ext_{A^{op}}^v( \Ext_A^v(M,A),A) =0$ for all $v \neq \grade M$.
\end{theorem}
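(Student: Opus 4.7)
My plan is to deduce the statement from the Auslander biduality spectral sequence, which is the central technical tool in Björk's theory of Auslander-Gorenstein rings. For any finitely generated right $A$-module $M$ one has a convergent spectral sequence
$$E_2^{p,q} = \Ext^p_{A^{op}}(\Ext^q_A(M,A), A) \Longrightarrow H^{p-q}$$
with $H^0 = M$ and $H^n = 0$ for $n \neq 0$. It arises by choosing a projective resolution $P_\bullet \to M$, applying $\Hom_A(-,A)$ to obtain a bounded complex of projective left $A$-modules, and computing the hyper-$\Ext$ into $A$ in two ways. Auslander-regularity guarantees the convergence and, via the Auslander condition, yields $\grade \Ext^q_A(M,A) \geq q$ for every $q$. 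In particular, $E_2^{p,q} = 0$ for $p < q$, so the spectral sequence is supported on and above the main diagonal.

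For the implication ($\Leftarrow$), suppose $E_2^{v,v} = 0$ for every $v \neq g := \grade M$. Combined with the vanishing above the diagonal, the only possibly non-zero diagonal term is $E_2^{g,g}$, and the positions $(p,q)$ with $p>q$ do not contribute to the total degree $0$ part of the abutment. All differentials into or out of $E_r^{g,g}$ either have zero source or zero target, so the spectral sequence collapses along its diagonal and provides a canonical embedding of $M$ into $E_2^{g,g} = \Ext^g_{A^{op}}(\Ext^g_A(M,A), A)$. Any non-zero submodule $N \subseteq M$ is therefore a non-zero submodule of this $\Ext$-module, and the Auslander condition applied to $\Ext^g_A(M,A)$ then forces $\grade N \geq g$. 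Combined with $\grade N \leq \grade M = g$, this gives $\grade N = g$ and hence purity.

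For the implication ($\Rightarrow$), assume every non-zero submodule of $M$ has grade $g$. One argues that a non-vanishing term $E_\infty^{v,v}$ with $v \neq g$ produces, via the filtration on the abutment, a non-zero subquotient $M'$ of $M$ all of whose submodules have grade $\geq v$; pureness then promotes $M'$ to an honest non-zero submodule of $M$ of grade $v$, contradicting the hypothesis. Concretely, one filters $M$ by the stages of the spectral sequence and uses that on an Auslander-Gorenstein ring every successive quotient has its grade controlled by the position on the $E_\infty$-page, so a non-trivial contribution away from the $g$-th diagonal entry is incompatible with pureness.

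The main obstacle is the careful bookkeeping with the spectral sequence: one must verify that the associated filtration of $M$ has graded pieces in the expected grades, and that a non-zero off-diagonal $E_\infty$ entry yields not merely a subquotient but an actual submodule of $M$ of the unexpected grade. This is precisely the technical content of the standard treatments of the Auslander condition in Björk's work, to which the statement is attributed.
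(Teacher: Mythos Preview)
The paper does not prove this statement itself; it simply cites \cite[Proposition~2.6, Appendix~IV]{Bj}. Your choice of tool, the biduality spectral sequence, is exactly Bj\"ork's, so the overall approach is the intended one. However, the sketch contains a genuine error in one direction and a real gap in the other.

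In the ($\Leftarrow$) direction you write ``Combined with $\grade N \le \grade M = g$''. This inequality is false; over an Auslander--Gorenstein algebra the \emph{opposite} holds. From the short exact sequence formula $\grade E=\min\{\grade N,\grade(E/N)\}$ (quoted in the paper as \Cref{prop::grade_SES}) one gets $\grade N\ge\grade M$ for every submodule $N\subseteq M$. Thus the embedding $M\hookrightarrow E_2^{g,g}$ together with the Auslander condition only reproves the automatic inequality $\grade N\ge g$; the whole content of purity is the \emph{reverse} bound $\grade N\le g$, which your argument never establishes. (Concretely, in the distributive lattices treated later one finds $S(x)\subseteq I(x)$ with $\grade S(x)>\grade I(x)$, so submodules with strictly larger grade really occur.) The missing ingredient is that $\Ext^g_{A^{op}}(\Ext^g_A(M,A),A)$ is itself pure of grade $g$, which is a separate lemma in Bj\"ork and does not follow from the Auslander condition alone.

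In the ($\Rightarrow$) direction there are two problems. First, the argument you give concerns $E_\infty^{v,v}$, but the statement is about $E_2^{v,v}$; since the incoming differentials into a diagonal position vanish, $E_\infty^{v,v}$ is only a \emph{submodule} of $E_2^{v,v}$, so $E_\infty^{v,v}=0$ does not force $E_2^{v,v}=0$. Second, the step ``pureness then promotes $M'$ to an honest non-zero submodule'' is not justified: what is actually needed is the identification of the spectral-sequence filtration with the grade filtration $G^pM=\{m\in M:\grade(mA)\ge p\}$, together with the sharper vanishing $\grade\Ext^v_A(M,A)\ge v+1$ for $v>g$ when $M$ is pure, which then kills $E_2^{v,v}$ directly. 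Your final paragraph correctly flags this bookkeeping as the crux, but as written the proof does not carry it out.
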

\begin{proof}
See \cite[Proposition 2.6, Appendix IV]{Bj}.
\end{proof}
We can now prove our first main result:
\begin{theorem} \label{propositioncharacterisepureausreg}
Let $A$ be an Auslander regular algebra.
Then $A$ has a right pure minimal injective coresolution if and only if $A$ is right diagonal and every indecomposable injective right $A$-module is perfect.
\end{theorem}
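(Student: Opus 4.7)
The plan is to observe that the forward direction has already been handled by \Cref{lemmapure}, so the entire content of the theorem is the converse: assuming $A$ is right diagonal Auslander regular and every indecomposable injective right $A$-module is perfect, we must show that each term $I^i$ in the minimal injective coresolution $0 \to A \to I^0 \to I^1 \to \cdots \to I^n \to 0$ is pure of grade $i$.

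For the converse, I would fix $i$ and decompose $I^i = \bigoplus_j X_j$ into indecomposable injective summands. Because $A$ is right diagonal, each $X_j$ satisfies $\pdim X_j = i$; because $X_j$ is perfect by hypothesis, this forces $\grade X_j = \pdim X_j = i$ as well. Consequently $\Ext_A^v(X_j,A) = 0$ for every $v \neq i$ (vanishing below $i$ by the definition of grade, vanishing above $i$ by the projective dimension bound). Summing over $j$ yields $\Ext_A^v(I^i,A) = 0$ for all $v \neq i$, and $\Ext_A^i(I^i,A) \neq 0$ since each summand contributes a nonzero $\Ext_A^i(X_j,A)$. In particular $\grade I^i = i$, which already settles half of the pureness condition.

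Finally, I would invoke \Cref{Bjoerklemma}: $I^i$ is pure precisely when $\Ext_{A^{op}}^v(\Ext_A^v(I^i,A),A) = 0$ for all $v \neq \grade I^i$. But this condition is automatic from the previous paragraph, since the inner $\Ext_A^v(I^i,A)$ already vanishes for every $v \neq i$. Hence every nonzero submodule of $I^i$ has grade $i$, i.e., $I^i$ is pure of grade $i$, and $A$ has a right pure minimal injective coresolution.

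There is essentially no serious obstacle here; the argument is a clean bookkeeping of grades versus projective dimensions combined with Björk's spectral-sequence characterisation of purity (\Cref{Bjoerklemma}). The only point that requires a moment's care is the passage from pureness of each indecomposable summand $X_j$ to pureness of the sum $I^i$, and this is handled uniformly by applying Björk's criterion directly to $I^i$ rather than summand by summand, which avoids having to argue separately that an arbitrary submodule of a direct sum of pure modules of equal grade is again pure.
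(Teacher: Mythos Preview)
Your proof is correct and follows essentially the same route as the paper: both directions cite \Cref{lemmapure} for the forward implication, and for the converse both use the right diagonal condition together with perfectness of each indecomposable summand to force $\Ext_A^v(I^i,A)=0$ for $v\neq i$, then apply Bj\"ork's criterion (\Cref{Bjoerklemma}) to conclude pureness. The only cosmetic difference is that the paper phrases the last step by first invoking additivity of $\Ext$ to reduce to indecomposable summands $L$ and then noting $\Ext_A^v(L,A)=0$, whereas you sum first and apply the criterion to $I^i$ directly; the content is identical.
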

\begin{proof}
 We already saw that if $A$  has a right pure minimal injective coresolution then it is right diagonal and every indecomposable injective right $A$-module is perfect in Proposition \ref{lemmapure}.
 Now assume that $A$ is right diagonal and every indecomposable injective right $A$-module is perfect. Then $\grade I^i =i$ for all $i$ since $\pdim I^i = i$ by the right diagonal Auslander regular condition and all since all indecomposable direct summands of $I^i$ are perfect modules. Now $I^i$ has the property that all non-zero submodules have the same grade as $I^i$ if and only if $\Ext_{A^{op}}^v( \Ext_A^v(I^i,A),A) =0$ for all $v \neq \grade I^i=i$ by Theorem \ref{Bjoerklemma}.
 But $\Ext$ is additive with respect to direct summands and so this condition simplifies to $\Ext_{A^{op}}^v( \Ext_A^v(L,A),A) =0$ for all $v \neq i$ for all indecomposable direct summands $L$ of $I^i$. 
 Such an indecomposable injective direct summand $L$ is perfect of grade $i$ and thus $\Ext_A^v(L,A)=0$ for $v \neq i$, which proves the claim. 
\end{proof}

\begin{corollary}\label{cor::AR+pure-iff-inj+simples-perfect}
Let $A$ be a finite dimensional algebra of finite global dimension. Then the following are equivalent:
\begin{enumerate}
    \item $A$ is an Auslander regular algebra with a pure minimal injective coresolution.
    \item Every indecomposable injective $A$-module and every simple $A$-module is perfect, that is: For every indecomposable injective $A$-module, we have $\pdim I=\grade I=\grade S=\pdim S$.
\end{enumerate}

\end{corollary}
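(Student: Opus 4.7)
The plan is to deduce both directions from \Cref{propositioncharacterisepureausreg} together with the equivalence from \cite[Corollary 3.4]{KMT} between ``every simple is perfect'' and ``$A$ is right diagonal Auslander regular''. I read condition~(2) as the chain of equalities $\pdim I = \grade I = \grade S = \pdim S$ spelled out in the statement, for $I = I(S)$ the indecomposable injective envelope of the simple $S$.

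For $(1) \Rightarrow (2)$, \Cref{lemmapure} yields that $A$ is right diagonal and every indecomposable injective is perfect, so that $\pdim I = \grade I = i$ whenever $I$ is an indecomposable summand of $I^i$. By \Cref{Bensonlemma}(2), $I = I(S)$ appears in $I^i$ exactly when $\Ext^i(S,A) \neq 0$, and \cite[Corollary 3.4]{KMT} applied to the right diagonal Auslander regular algebra $A$ gives that every simple is also perfect, forcing $i = \grade S = \pdim S$. Assembling these, the four quantities coincide.

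For $(2) \Rightarrow (1)$, the main task is to establish Auslander regularity, after which \Cref{propositioncharacterisepureausreg} immediately supplies the pure minimal injective coresolution. I would write $I^i = \bigoplus_{S} I(S)^{n_{S,i}}$; by \Cref{Bensonlemma}(2) together with perfectness of $S$, the multiplicity $n_{S,i}$ is non-zero only when $i = \grade S = \pdim S$, at which point the chain equality in (2) gives $\pdim I(S) = i$. Hence $\pdim I^i \leq i$, and since $\pdim S \leq \gldim A < \infty$ forces $I^i = 0$ for $i > \gldim A$, one concludes that $A$ is Iwanaga-Gorenstein of finite global dimension, hence Auslander regular. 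The remaining hypothesis required to invoke \Cref{propositioncharacterisepureausreg}, namely perfectness of every indecomposable injective, is part of (2).

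The only subtlety is the bookkeeping step in $(2) \Rightarrow (1)$ where $\pdim I^i \leq i$ is extracted; this crucially uses the chain equality $\pdim I(S) = \pdim S$ from (2), rather than only the individual perfectness of $I(S)$ and of $S$. Once this is in place the argument is a direct combination of \Cref{lemmapure}, \Cref{propositioncharacterisepureausreg}, \Cref{Bensonlemma}, and the cited \cite[Corollary 3.4]{KMT}.
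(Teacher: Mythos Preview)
Your argument is correct and follows essentially the same architecture as the paper's proof: both directions are routed through \Cref{propositioncharacterisepureausreg} and the characterisation of right diagonal via perfect simples from \cite[Corollary~3.4]{KMT}. The one difference is that for $(2)\Rightarrow(1)$ the paper obtains Auslander regularity by invoking \cite[Theorem~1.1]{KMT} (Auslander regular $\Leftrightarrow$ $\grade S=\pdim I(S)$ for every simple $S$), reading this off directly from the middle equality in the chain, whereas you re-derive $\pdim I^i\le i$ by hand via \Cref{Bensonlemma}; these are equivalent.

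One small expository point: when you invoke \Cref{propositioncharacterisepureausreg} at the end of $(2)\Rightarrow(1)$ you name only ``perfectness of every indecomposable injective'' as the remaining hypothesis, but that theorem also requires \emph{right diagonal}. You have in fact already established it---your Benson-lemma computation shows that every indecomposable summand $I(S)$ of $I^i$ satisfies $\pdim I(S)=i$---or alternatively, once Auslander regularity is in hand, \cite[Corollary~3.4]{KMT} supplies it from the perfectness of simples. Just make this explicit.
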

\begin{proof}
This follows from Lemma \ref{lemmapure} and Proposition \ref{propositioncharacterisepureausreg} together with \cite[Theorem 1.1 and Corollary 3.4]{KMT}, which state that being Auslander regular is equivalent to $\grade S=\pdim I(S)$ for every simple module $S$ with injective envelope $I(S)$ and that being right diagonal for an Auslander regular algebra is equivalent to all simple modules being perfect.
\end{proof}

\begin{corollary} \label{purereducedtoinjperfectcor}
The incidence algebra $A$ of a distributive lattice is pure if and only if every indecomposable injective right $A$-module is perfect.
\end{corollary}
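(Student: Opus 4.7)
The plan is to deduce this corollary directly from the preceding \Cref{cor::AR+pure-iff-inj+simples-perfect} together with the right diagonal Auslander regularity of $KL$ for distributive $L$, which is stated in the introduction and established in \cite{IM}. More precisely, I want to argue that the simple-module condition in \Cref{cor::AR+pure-iff-inj+simples-perfect} is automatic in this setting, so the characterisation collapses to the injective-module condition.

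First, I would invoke \Cref{theoremausregiffdistr} to conclude that the incidence algebra $A=KL$ of a finite distributive lattice $L$ is Auslander regular. Next, I would use the stronger result from \cite{IM} recalled in the introduction, namely that for $L$ distributive the algebra $KL$ is in fact \emph{right diagonal} Auslander regular, meaning that every indecomposable direct summand of $I^i$ in the minimal injective coresolution of $A$ has projective dimension equal to $i$. By \cite[Corollary 3.4]{KMT} (the same reference used in the proof of \Cref{cor::AR+pure-iff-inj+simples-perfect}), for an Auslander regular algebra being right diagonal is equivalent to every simple $A$-module being perfect. In particular, every simple $KL$-module is perfect without any further assumption on $L$.

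With this in hand, the corollary is immediate: by \Cref{cor::AR+pure-iff-inj+simples-perfect}, $A$ has a pure minimal injective coresolution if and only if every indecomposable injective $A$-module and every simple $A$-module is perfect; but as just observed, for $A=KL$ the simples are automatically perfect, and so the condition reduces to demanding that every indecomposable injective right $A$-module be perfect.

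I do not anticipate any real obstacle here, as the corollary is essentially a bookkeeping consequence of \Cref{cor::AR+pure-iff-inj+simples-perfect} combined with the known right diagonality of incidence algebras of distributive lattices. The only delicate point is to make sure that ``pure'' in the statement is understood as ``has a pure minimal injective coresolution'' in the sense of \Cref{maindefinition}, so that \Cref{cor::AR+pure-iff-inj+simples-perfect} directly applies; this should be spelled out explicitly at the start of the proof to avoid ambiguity.
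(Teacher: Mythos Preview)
Your proposal is correct and essentially follows the paper's approach. The only difference is a small detour: the paper invokes \Cref{propositioncharacterisepureausreg} directly (pure $\Leftrightarrow$ right diagonal $+$ injectives perfect) together with the right diagonality of $KL$, whereas you route through \Cref{cor::AR+pure-iff-inj+simples-perfect} and then use the KMT equivalence (right diagonal $\Leftrightarrow$ simples perfect) to discharge the simple-module condition---same ingredients, one extra step.
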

\begin{proof}
 This follows directly since incidence algebras of distributive lattices are right diagonal and using Theorem \ref{propositioncharacterisepureausreg}.
\end{proof}

We remark that there are incidence algebras $KP$ of finite posets $P$ that are not lattices that are Auslander regular with a pure minimal injective coresolution as the next example shows:
\begin{example} \label{nonlatticeexample}
Consider the poset $P$ below.
\[\begin{tikzpicture}[>=latex,line join=bevel,]
\node (node_0) at (59.0bp,6.5bp) [draw,draw=none] {$0$};
  \node (node_1) at (40.0bp,55.5bp) [draw,draw=none] {$1$};
  \node (node_3) at (78.0bp,55.5bp) [draw,draw=none] {$3$};
  \node (node_2) at (6.0bp,104.5bp) [draw,draw=none] {$2$};
  \node (node_6) at (74.0bp,153.5bp) [draw,draw=none] {$6$};
  \node (node_4) at (44.0bp,153.5bp) [draw,draw=none] {$4$};
  \node (node_5) at (112.0bp,104.5bp) [draw,draw=none] {$5$};
  \node (node_7) at (59.0bp,202.5bp) [draw,draw=none] {$7$};
  \draw [black,->] (node_0) ..controls (54.05bp,19.746bp) and (49.688bp,30.534bp)  .. (node_1);
  \draw [black,->] (node_0) ..controls (63.95bp,19.746bp) and (68.312bp,30.534bp)  .. (node_3);
  \draw [black,->] (node_1) ..controls (30.935bp,69.031bp) and (22.645bp,80.491bp)  .. (node_2);
  \draw [black,->] (node_1) ..controls (47.187bp,76.792bp) and (60.97bp,115.71bp)  .. (node_6);
  \draw [black,->] (node_2) ..controls (16.247bp,118.17bp) and (25.789bp,129.98bp)  .. (node_4);
  \draw [black,->] (node_3) ..controls (70.813bp,76.792bp) and (57.03bp,115.71bp)  .. (node_4);
  \draw [black,->] (node_3) ..controls (87.065bp,69.031bp) and (95.355bp,80.491bp)  .. (node_5);
  \draw [black,->] (node_4) ..controls (47.908bp,166.75bp) and (51.351bp,177.53bp)  .. (node_7);
  \draw [black,->] (node_5) ..controls (101.75bp,118.17bp) and (92.211bp,129.98bp)  .. (node_6);
  \draw [black,->] (node_6) ..controls (70.092bp,166.75bp) and (66.649bp,177.53bp)  .. (node_7);
\end{tikzpicture}\]

Then $P$ is not a lattice but $KP$ is diagonally Auslander regular with all indecomposable injective modules perfect. Thus it has a pure minimal injective coresolution. 
We found this example using \cite{QPA} and refer to the example \ref{qpacodecomplicatedexample} in the appendix for a computer software verification.
\end{example}

We thus pose the following problem:

\begin{problem}
Classify the posets $P$ such that $KP$ has a pure minimal injective coresolution.
\end{problem}

Based on several hundred examples, we expect that an algebra of finite global dimension with a minimal pure injective coresolution is in fact Auslander regular. We pose this as a question: 
\begin{question}
Let $A$ be an algebra of finite global dimension and a minimal pure injective coresolution. Is $A$ Auslander regular?
\end{question}

When $A=KL$ is the incidence algebra of a lattice we pose this as a conjecture. We verified this conjecture for all finite lattices with at most 10 elements:
\begin{conjecture}
Let $A=KL$ be the incidence algebra of a finite lattice $L$ and assume that $A$ has a pure minimal injective coresolution.
Then $L$ is distributive.
\end{conjecture}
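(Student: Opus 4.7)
The plan is to show that the purity hypothesis forces $KL$ to be Auslander regular, at which point Theorem \ref{theoremausregiffdistr} yields distributivity of $L$. The first and cleanest step would be to establish that every simple right $KL$-module is perfect. By Lemma \ref{Bensonlemma}(2) applied to the minimal injective coresolution of $A = KL$, we have $\Ext^l_A(S,A) \neq 0$ if and only if $S$ embeds in $I^l$; the purity hypothesis then forces $l = \grade S$ whenever this holds. On the other hand, for any simple $S$ with finite projective dimension $n$, a standard minimality argument (the last differential $P_n \to P_{n-1}$ lands in $\rad P_{n-1}$, yet $\Hom(P_n, A)/\Hom(P_n, \rad A)$ is nonzero) gives $\Ext^n_A(S, A) \neq 0$. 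Combining, $\pdim S = \grade S$, so every simple is perfect.

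The second, harder step is to upgrade \emph{all simples perfect} to Auslander regularity. By \cite[Theorem 1.1]{KMT}, $A$ is Auslander regular if and only if $\grade S = \pdim I(S)$ for every simple $S$; so it suffices to show $\pdim S = \pdim I(S)$ in our setting. I would exploit the incidence algebra structure: indecomposable injectives are antichain modules $I(y) \cong M_C$ for $C = \min([m,y]^c)$ by the discussion preceding \Cref{thm::min_resolution_injectives_distributive}, so their projective dimension is bounded above by $|C|$ via the antichain resolution (\ref{eq::antichain_resolution}). The challenge is to use the purity of all higher terms $I^i$ to control how the cosyzygies of $A$ embed into direct sums of $I(y)$'s and to match $\pdim I(y)$ with $\grade S(y)$ on the nose. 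Alternatively, one could attempt a combinatorial contrapositive: assuming $L$ is non-distributive, Birkhoff--Dedekind supplies a sublattice isomorphic to $M_3$ or $N_5$, and one would try to construct an explicit indecomposable injective whose antichain resolution is non-minimal in a way that creates a socle simple of the wrong grade in some $I^i$.

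The main obstacle is precisely Step 2: in general, simples being perfect is strictly weaker than Auslander regularity, so the argument must exploit the special combinatorial structure of lattice incidence algebras rather than pure homological formalism. This gap is likely why the authors leave the statement as a conjecture verified only by computer for lattices of size at most ten; a full proof would presumably require a delicate analysis of when non-distributivity manifests as non-minimality in antichain resolutions and of how this non-minimality propagates to the global injective coresolution to obstruct purity.
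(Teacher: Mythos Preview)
The paper does not prove this statement; it is explicitly recorded as a conjecture, supported only by computer verification for lattices with at most ten elements. There is therefore no proof in the paper against which to compare your proposal, and you evidently recognise this.

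Your Step~1 is correct and in fact goes slightly beyond what the paper establishes: without assuming Auslander regularity, the purity hypothesis alone forces every simple module to be perfect. The argument is sound, though your phrasing of the minimality step is a bit garbled; the clean version is that if $\Ext^n_A(S,A)=0$ for $n=\pdim S$, then every map $P_n\to A$ factors through $P_{n-1}$, so in particular a split embedding $P_n\hookrightarrow A^m$ does, yielding a retraction of $P_n\to P_{n-1}$ and contradicting minimality. This step works for any finite-dimensional algebra of finite global dimension, so it is already a first reduction toward the more general Question posed just before the conjecture.

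You are equally right that Step~2 is where the genuine difficulty lies, and your diagnosis matches the paper's own stance. The implication \emph{all simples perfect} $\Rightarrow$ \emph{Auslander regular} fails for general algebras, so one must exploit the lattice structure. Your suggested contrapositive via an $M_3$ or $N_5$ sublattice is the natural combinatorial attack, and the pentagon example immediately after the conjecture illustrates the kind of witness one would need to produce; but locating such a submodule inside the minimal injective coresolution of an arbitrary non-distributive lattice and computing its grade is not addressed by any tool in the paper. Your proposal accurately names the gap and why it remains open.
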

We will see in a later section the complete classification of distributive lattices $L$ with $KL$ having a pure minimal injective coresolution using our main result, Theorem \ref{propositioncharacterisepureausreg} of this section.

We end this section with a simple example on how to show that the incidence algebra $KL$ of a non-distributive lattice $L$ does not have a minimal pure injective coresolution. The method to look at indecomposable injective direct summands of $I^i$ usually, but not always, works:
\begin{example}
Let $L$ be the pentagon lattice:
\[\begin{tikzcd}
	& 5 \\
	&& 4 \\
	2 && 3 \\
	& 1
	\arrow[from=2-3, to=1-2]
	\arrow[from=3-1, to=1-2]
	\arrow[from=3-3, to=2-3]
	\arrow[from=4-2, to=3-1]
	\arrow[from=4-2, to=3-3]
\end{tikzcd}\]

Then $A=KL$ has the minimal injective coresolution
$$0 \rightarrow A \rightarrow I^0 \rightarrow I^1 \rightarrow I^2 \rightarrow 0$$
with $\grade I^i=i$, but 
$I^1$ is not pure, since the indecomposable injective module $I(3)$ is a submodule of $I^1$ with grade equal to 2.
\end{example}

\section{On perfect antichain modules in lattices}

In this section, we consider a lattice $L$ which is not necessarily distributive. We will use terminology from \cite{Go} only adapting them to the convetions of this paper. We say that an antichain $C$ of $L$ is \emph{strong} if for all subsets $S, S'$ of $C$, it holds that if $\vee S \leq \vee S'$ then $S \subseteq S'$. We say that an antichain is \emph{Boolean} if any two subsets $S, S'$ of $C$ satisfy the following identity
\[\vee S \wedge \vee S' = \vee (S\cap S').\]
The goal of this section is to give an algebraic interpretation for modules associated to Boolean antichains. 

\begin{lemma}
\label{lem::strong_iff_resolution_minimal}
Let $L$ be a lattice and $C$ an antichain in $L$. Then $C$ is strong if and only if the antichain resolution associated to $C$ is minimal.
\end{lemma}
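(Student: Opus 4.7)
The plan is to unpack both conditions in terms of the antichain resolution's matrix entries and reduce the equivalence to a short combinatorial argument. Recall that a projective resolution of finitely generated modules over a finite dimensional algebra is minimal precisely when every differential carries each indecomposable summand into the radical of the next term. Since each $P_r$ in the antichain resolution (\ref{eq::antichain_resolution}) is a direct sum of indecomposable projectives $P(\vee S)$ indexed by $|S|=r$, minimality amounts to requiring that for every pair of subsets $S' \subset S \subseteq C$ with $|S| = |S'| + 1$, the matrix entry of $\partial_{|S|}$ from the summand $P(\vee S)$ to the summand $P(\vee S')$ lies in $\rad \Hom(P(\vee S), P(\vee S'))$.

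First I would identify these matrix entries explicitly. By the description of the antichain resolution, each such entry is (up to a Koszul sign) the canonical map $P(\vee S) \to P(\vee S')$ induced by multiplication with the path $p_{\vee S'}^{\vee S}$, which exists because $\vee S' \leq \vee S$. For indecomposable projectives over $KL$, the canonical map $P(y) \to P(x)$ for $x \leq y$ is an isomorphism when $x = y$ and otherwise has image in $\rad P(x)$. Therefore, the minimality of the antichain resolution is equivalent to the condition
\begin{equation*}
\vee (S \setminus \{x\}) \neq \vee S \quad \text{for every } S \subseteq C \text{ and every } x \in S,
\end{equation*}
which, writing $S' = S \setminus \{x\}$, is the same as saying: for every $S' \subsetneq C$ and every $x \in C \setminus S'$, we have $x \not\leq \vee S'$.

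Next I would show that this local condition is equivalent to $C$ being strong. For the easy direction, if $C$ is strong and $x \in C \setminus S'$ satisfied $x \leq \vee S'$, then $\vee \{x\} \leq \vee S'$ would force $\{x\} \subseteq S'$ by strongness, a contradiction. For the converse, suppose the local condition holds and take arbitrary $S, S' \subseteq C$ with $\vee S \leq \vee S'$; if some $x \in S \setminus S'$ existed, then $x \leq \vee S \leq \vee S'$ with $x \notin S'$, again contradicting the local condition, so $S \subseteq S'$.

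\textbf{Main obstacle.} No step is particularly deep, but the part that requires care is bookkeeping the matrix entries of $\partial_r$: one must verify that no cancellation between Koszul-signed terms can accidentally produce a zero entry that is nevertheless counted, which is immediate here because for a fixed pair $S' \subset S$ with $|S \setminus S'| = 1$ there is exactly one contributing summand in the differential, so the entry is (up to sign) a single canonical map and cancellation does not occur. Once this is noted, the equivalence is just the two short combinatorial implications above.
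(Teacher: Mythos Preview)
Your proposal is correct and follows essentially the same approach as the paper: both reduce minimality to the condition that $\vee(S\setminus\{x\})<\vee S$ for all $S\subseteq C$ and $x\in S$, and then identify this with strongness. Your converse direction is in fact slightly more streamlined than the paper's, which invokes an external lemma (\cite[Lemma~2.1.1]{Go}) to reduce to the case $|S|=|S'|$ before producing the witness $S'\cup\{x\}$; your direct argument that any $x\in S\setminus S'$ already violates the local condition makes that reduction unnecessary.
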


\begin{proof}
The proof of the direct statement can be found in \cite[Proposition 3.2.4]{Go2}. We reproduce it here for completeness.
Fix an integer $k\leq |C|$. To see that the kernel of $\ker(\partial_k) = \im(\partial_{k +1})$ is superfluous in
\[\bigoplus_{|S| = k} P(\vee S)\] 
recall that the image of $\partial_{k+1}$ is generated by paths associated to relations $\vee S \geq \vee (S-\{i\})$ of the posets. Because the antichain is strong we always have $\vee S > \vee (S-\{i\})$ so that the lazy paths are not in $\ker(\partial_k)$. 

Conversely, suppose $C$ is not a strong antichain. Then there exists $S, S'$ such that $\vee S\leq \vee S'$ and $S\not\subseteq S'$. According to \cite[Lemma 2.1.1]{Go}, we can assume that $S$ and $S'$ have the same cardinality. We denote it by $k$. Let $x$ be an element of $S$ but not of $S'$. By construction we have 
\[\vee(S'\cup \{x\}) = x\vee (\vee S') = \vee S'.\]
Hence the projective indecomposable $P(\vee S')$ appears in degree $k+1$ as well. The image of the map $\partial_k$ is thus not superfluous and the antichain resolution is not minimal.
\end{proof}

The following lemma reformulates the notion of a Boolean antichain in a way which is more suitable for discussing the properties of their antichain resolution.
\begin{lemma}\label{lemma:LocalBoolean}An antichain $C$ of a lattice $L$ is Boolean \emph{iff} for all $S\subset C$ such that $|S| \leq |C| - 2$ we have
\begin{equation}\label{eq:BooleanReformulation}\vee S = \bigwedge_{x\in C\setminus S} (\vee (S \cup \{x\})).\end{equation}
\end{lemma}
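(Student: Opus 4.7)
The plan is to prove the two implications separately, with the forward direction being a direct computation from the definition and the backward direction requiring an iteration argument followed by case analysis.

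For the forward direction, I would fix $S \subset C$ with $|S| \leq |C| - 2$ and observe that $\vee S \leq \bigwedge_{x \in C \setminus S} \vee(S \cup \{x\})$ holds trivially. For the reverse inequality, pick any two distinct elements $x_0, x_1 \in C \setminus S$ (which exist since $|C \setminus S| \geq 2$). The Boolean condition applied to the pair $(S \cup \{x_0\}, S \cup \{x_1\})$ gives
\[\vee(S \cup \{x_0\}) \wedge \vee(S \cup \{x_1\}) = \vee((S \cup \{x_0\}) \cap (S \cup \{x_1\})) = \vee S,\]
and bounding the full meet by this two-term meet finishes the forward direction.

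For the converse, I would first iterate the hypothesis to establish the auxiliary identity
\[\vee T = \bigwedge_{y \in C \setminus T} \vee(C \setminus \{y\}) \quad \text{for every } T \subseteq C \text{ with } |T| \leq |C| - 2.\]
The proof is by induction on $k := |C| - 1 - |T| \geq 1$. The base case $k = 1$ (so $|T| = |C| - 2$) is the hypothesis itself, since then $T \cup \{x\} = C \setminus \{y\}$ for the unique $y \in (C \setminus T) \setminus \{x\}$. For the inductive step ($k \geq 2$), apply the hypothesis to $T$, then apply the inductive identity to each $\vee(T \cup \{x\})$ (which is legal since $|T \cup \{x\}| \leq |C| - 2$), and observe that the resulting double meet over pairs $(x,y) \in C \setminus T$ with $x \neq y$ collapses to a single meet over $y \in C \setminus T$ by idempotence.

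With this identity in hand, I would deduce the Boolean condition by case analysis. Setting $T = S \cap S'$, the inequality $\vee T \leq \vee S \wedge \vee S'$ is automatic. For the reverse direction, the cases $S \subseteq S'$ and $S' \subseteq S$ are trivial; otherwise both $S \setminus S'$ and $S' \setminus S$ are non-empty, so $|S \cup S'| \geq |T| + 2$, forcing $|T| \leq |C| - 2$. Applying the auxiliary identity, it suffices to show $\vee S \wedge \vee S' \leq \vee(C \setminus \{y\})$ for every $y \in C \setminus T$. But for any such $y$, either $y \notin S'$ (giving $\vee S' \leq \vee(C \setminus \{y\})$) or $y \notin S$ (giving $\vee S \leq \vee(C \setminus \{y\})$); either way the meet $\vee S \wedge \vee S'$ lies below $\vee(C \setminus \{y\})$.

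The main obstacle is the iteration in Step 1: the essential idea is that the local hypothesis (meet over codimension-one oversets) propagates upward through the lattice of subsets containing $T$ and eventually reaches the codimension-one subsets of $C$ themselves. Once this transitive reach is available, the Boolean condition follows almost mechanically from whichever side of $S \triangle S'$ contains the witness $y$.
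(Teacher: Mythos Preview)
Your proof is correct. Both directions are handled cleanly, and the iteration argument in your converse is sound: the double meet indeed collapses because $|C\setminus T|\geq 3$ in the inductive step, so every $y\in C\setminus T$ appears as a second coordinate.

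The organization of your converse differs from the paper's. The paper fixes $S,S'$ from the start and proves directly, by reverse induction on $|E|$, that every subset $E\supseteq S\cap S'$ satisfies $(\vee S)\wedge(\vee S')\leq \vee E$; the hypothesis \eqref{eq:BooleanReformulation} is invoked once per step to rewrite $\vee E$ as a meet of larger $\vee(E\cup\{x\})$, each of which is handled by the induction. You instead first extract a standalone identity $\vee T=\bigwedge_{y\notin T}\vee(C\setminus\{y\})$ valid for all $|T|\leq |C|-2$, and only then specialize to $T=S\cap S'$. Your route has the advantage that the auxiliary identity is an interesting consequence in its own right and makes the final case analysis on $y\in C\setminus T$ very transparent; the paper's route is marginally shorter because it avoids the intermediate identity and the separate case analysis for $S\subseteq S'$ (those cases are absorbed into the base case $|E|=|C|-1$ of the reverse induction). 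Both arguments rest on the same mechanism: iterating the local hypothesis upward through the subset lattice.
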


\begin{proof}
If $C$ is a Boolean antichain, the equality above holds. Conversely, assume that these equalities hold for all subsets of $C$ of cardinality at most $|C|-2$. Let $S$ and $S'$ be arbitrary subsets of $C$. It always holds that
\[(\vee S) \wedge (\vee S')  \geq \vee(S\cap S').\]
We will prove that every subset $E$ of $C$ which contains $S\cap S'$ satisfies
\begin{equation}(\vee S) \wedge (\vee S') \leq \vee E\end{equation}
by reverse induction on the size of $E$. First notice that the inequality is clear when $C = E$. When $|E| = |C| - 1$, if $S \not\subset E$ then $S' \subset E$ and the inequality holds again. Assume that it holds for any subset containing $S\cap S'$ of cardinality $k$ for some fixed $k \leq |C| - 2$. Consider a subset $E$ of cardinality $k-1$. By \cref{eq:BooleanReformulation} we have 
\[\vee E = \bigwedge_{x\in E^c} (\vee (E \cup \{x\})).\]
By the induction hypothesis we then have that for every $x\in E^c$, 
\[\vee (E \cup \{x\}) \geq (\vee S)\wedge (\vee S').\]
This concludes the induction step. The minimal case, when $E = S\cap S'$ ensures that $C$ is a Boolean antichain.
\end{proof}

\begin{proposition}\label{prop::perfect-iff-Boolean}
Let $L$ be a lattice and $C$ a strong antichain in $L$. Then the antichain module $M$ associated to $C$ is perfect if and only if $C$ is Boolean.
\end{proposition}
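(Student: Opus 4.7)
The plan is to turn $\Ext^\ast(M_C, A)$ into a combinatorial computation by dualising the antichain resolution. Since $C$ is strong, \Cref{lem::strong_iff_resolution_minimal} says \eqref{eq::antichain_resolution} is minimal, so $\pdim M_C = |C|$ and $M_C$ is perfect iff $\Ext^i(M_C, A) = 0$ for all $i < |C|$. Applying $\Hom_A(-,A)$ gives a cochain complex with $k$-th term $\bigoplus_{|S|=k} Ae_{\vee S}$. Using the basis $\{p_v^{\vee S}: v\leq \vee S\}$ of each $Ae_{\vee S}$, this complex decomposes as a $K$-linear direct sum over $v\in L$ of sub-complexes $C_v^\bullet$, where $C_v^k$ is spanned by the subsets $S \subseteq C$ of size $k$ lying in the up-set $A_v := \{S \subseteq C : v \leq \vee S\}\subseteq 2^C$, and the differential is the restriction of the Koszul coboundary $S \mapsto \sum_{x \in C\setminus S}\pm(S \cup \{x\})$. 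Consequently $M_C$ is perfect iff $H^i(C_v^\bullet) = 0$ for every $v\in L$ and every $i < |C|$.

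For the ``if'' direction, the Boolean identity $\vee S \wedge \vee S' = \vee(S \cap S')$ is exactly the statement that every $A_v$ is closed under intersections in $2^C$; combined with being an up-set, $A_v$ is then either empty or a principal filter $\uparrow S_v$. If $A_v = \emptyset$ then $C_v^\bullet = 0$; if $A_v = \{C\}$ then $C_v^\bullet$ is concentrated in degree $|C|$; otherwise, after the obvious shift by $|S_v|$, $C_v^\bullet$ is identified with the full Koszul complex on $C\setminus S_v$, which is acyclic. In every case no cohomology appears below degree $|C|$, so $M_C$ is perfect.

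For the converse, suppose $C$ is not Boolean and, via \Cref{lemma:LocalBoolean}, pick $S_0 \subseteq C$ of \emph{minimum} size satisfying $\vee S_0 < v := \bigwedge_{x \in C \setminus S_0}\vee(S_0\cup\{x\})$. By the very definition of $v$ every $S_0\cup\{x\}$ lies in $A_v$ while $S_0$ does not. The crux is the identification
\[
A_v = \{T \subseteq C : T \supsetneq S_0\}.
\]
Granting this, $T \mapsto T\setminus S_0$ identifies $C_v^\bullet$, after a shift by $|S_0|$, with the subcomplex of the Koszul complex on $C \setminus S_0$ spanned by the \emph{non-empty} subsets. Comparing this reduced Koszul complex to the acyclic full Koszul complex via the short exact sequence whose quotient is $K$ placed in degree $0$, the long exact cohomology sequence pins down its cohomology as $K$ concentrated in degree $1$. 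Hence $H^{|S_0|+1}(C_v^\bullet) \cong K$, and since $|S_0|+1 \leq |C|-1 < |C|$ this forces $\Ext^{|S_0|+1}(M_C, A)\neq 0$, so $M_C$ is not perfect.

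The main obstacle is establishing the claimed equality $A_v = \{T : T \supsetneq S_0\}$. The inclusion $\supseteq$ is immediate because any $T \supsetneq S_0$ contains some $S_0\cup\{x\}$, which lies in $A_v$. For $\subseteq$, given $T \in A_v$ with $T \not\supsetneq S_0$, the case $T \subseteq S_0$ is ruled out at once (it would give $\vee T \leq \vee S_0 < v$); otherwise $T$ meets both $S_0$ and $C \setminus S_0$ but misses some element of $S_0$. Here the minimality of $|S_0|$ is essential: by \Cref{lemma:LocalBoolean} the Boolean identity holds at every proper subset of $S_0$, and I would use this at $T \cap S_0$ together with the hypothesis $v \leq \vee T$ and the defining meets of $v$ to extract a smaller Boolean-failure witness inside $S_0$, contradicting the minimal choice of $S_0$. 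The careful manipulation of these meets and joins, invoking the strong antichain property to exclude degenerate coincidences, is the delicate technical heart of the argument.
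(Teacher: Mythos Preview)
Your overall strategy coincides with the paper's: dualise the antichain resolution, split $\Hom_A(-,A)$ as $\bigoplus_{v\in L}C_v^\bullet$ with $C_v^k$ spanned by those $T$ in the up-set $A_v=\{T\subseteq C:\vee T\ge v\}$, and analyse the shape of $A_v$ in each direction. Your ``if'' argument is in fact more self-contained than the paper's, which simply cites \cite[Theorem~2.2.3]{Go}; your observation that the Boolean identity makes every nonempty $A_v$ a principal filter, so that $C_v^\bullet$ is a shifted full Koszul complex and hence has no cohomology below degree $|C|$, is correct and complete.

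The only genuine gap is in the converse, and it is not where you locate the difficulty. You isolate the equality $A_v=\{T:T\supsetneq S_0\}$ as the ``delicate technical heart'' and propose to close it by choosing $|S_0|$ minimal and exploiting the local Boolean identity at $T\cap S_0$---but you never carry this out, and the route you sketch is unnecessary. The inclusion $\subseteq$ is a one-line consequence of the \emph{strong} hypothesis, which is precisely what the paper invokes (tersely): if $\vee T\ge v$ yet some $s\in S_0\setminus T$ exists, then
\[
\vee\{s\}=s\le \vee S_0 < v \le \vee T,
\]
and strength applied to the pair $(\{s\},T)$ forces $\{s\}\subseteq T$, i.e.\ $s\in T$, a contradiction. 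Together with $\vee S_0<v$ ruling out $T=S_0$, this gives $T\supsetneq S_0$. No minimality is needed---any $S_0$ witnessing the failure in \Cref{lemma:LocalBoolean} works. With this two-line fix in place, your degree-$(|S_0|{+}1)$ cohomology computation goes through verbatim and matches the paper's.
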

\begin{proof}
    Suppose that the antichain $C$ is Boolean. Let $x$ be an element of the lattice. Note that the projective indecomposable module $P(x)$ is the interval module associated to the interval $[x, M]$. By \cite[Theorem 2.2.3]{Go}, the extension group $\Ext^i(M, P(x))$ is non zero if and only if the set $E = \{S \subseteq C| \vee S \geq x\}$ is a singleton $\{S_x\}$ and $|S_x| = i$. However, if $E$ is not empty, $E$ automatically contains $C$. From this observation we conclude that the $\grade M = |C|$ and that $M$ is a perfect module.
    Conversely suppose that the antichain $C$ is not Boolean. According to \Cref{lemma:LocalBoolean} there exists a subset $S$ of $C$ of cardinality $r = |S| \leq |C| - 2$ such that
    \[\vee S < \bigwedge_{x\in C\setminus S} (\vee (S \cup \{x\})).\]
    We denote $y = \bigwedge_{x\in C\setminus S} (\vee (S \cup \{x\}))$. We claim that $\Ext^{r+1}(M, P(y))$ is non-zero. The group $\Ext^{r+1}(M, P(y))$ is the $r^{th}$ homology group of the total Hom functor
    \begin{multline*}
    \Hom(P(m), P(y)) \to\dots
    \to\Hom\Big(\bigoplus_{\substack{S'\subseteq C\\|S'| = r}} P(\vee S'), P(y)\Big) \to
    \Hom\Big(\bigoplus_{\substack{S'\subseteq C\\|S'| = r+1}} P(\vee S'), P(y)\Big) \to \\ \dots\to \Hom(P(\vee C), P(y)).
    \end{multline*}
    Recall that for any two elements $a, b$ of $L$
    \[\Hom(P(a) , P(b)) \cong \begin{cases}K &\text{if } a \geq b \\ 0 & \text{otherwise},\end{cases}\]
    and that $C$ was assumed to be strong. It follows that the total $\Hom$ complex is isomorphic to the following complex of vector spaces
    \[0 \to \dots\to 0\to \bigoplus_{\substack{S\subset S' \subset C\\ |S'| = r+1}} K \to\dots\to \bigoplus_{\substack{S\subset S' \subset C\\ |S'| = i}} K \to  \dots \to K.\]
    This is the the simplicial complex of the simplicial set
    \[\{S' \subseteq C | S \subset S'\}.\]
    Being a so-called \emph{stupid truncation} of the simplex $\{S' \subseteq C | S \subseteq S' \}$, this chain complex has indeed homology in degree $r+1 < |C|$. Because the antichain $C$ is strong, the projective dimension of $M$ is $|C|$. Hence, we have shown that $M$ is not perfect.
\end{proof}

By \cite[Corollary 3.3]{IM}, we know that the antichain resolution of injective indecomposable modules in a distributive lattice is minimal. Hence their corresponding antichain is strong and we have the following corollary.
\begin{corollary}
    Let $
L$ be a distributive lattice. Then an indecomposable injective module is perfect \emph{iff} its antichain is Boolean.
\end{corollary}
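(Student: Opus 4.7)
The plan is to reduce this corollary directly to the main proposition of the section, Proposition \ref{prop::perfect-iff-Boolean}, which characterizes perfect antichain modules among those arising from \emph{strong} antichains. The only gap between that proposition and the corollary is verifying that the antichain associated with an indecomposable injective module in a distributive lattice is automatically strong; once this is in place, the biconditional drops out immediately.

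First, I would recall the explicit realization of indecomposable injectives as antichain modules: for $x \in L$, we have $I(x) \cong M_C$ where $C = \min([m,x]^c)$, as noted in the paragraph preceding \Cref{thm::min_resolution_injectives_distributive}. Then I would invoke that theorem, which asserts that for $L$ distributive, the antichain resolution of $I(x) \cong M_C$ is a minimal projective resolution. By \Cref{lem::strong_iff_resolution_minimal}, minimality of the antichain resolution is equivalent to $C$ being a strong antichain, so $C$ is strong.

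With strongness of $C$ established, \Cref{prop::perfect-iff-Boolean} applies: the antichain module $M_C \cong I(x)$ is perfect if and only if $C$ is a Boolean antichain. This completes the proof. There is no real obstacle here, since the substantive work—the minimality of the antichain resolution for injectives (from \cite{IM}), the equivalence between minimality and strongness (\Cref{lem::strong_iff_resolution_minimal}), and the perfectness criterion for strong antichains (\Cref{prop::perfect-iff-Boolean})—has all been done. The corollary is simply the assembly of these three inputs. The only care required is to state correctly which antichain is being referred to, namely $C = \min([m,x]^c)$, whose elements are join-irreducible in the distributive case by \cite[Proposition 3.1]{IM}.
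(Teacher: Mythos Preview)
Your argument is correct and matches the paper's own proof essentially verbatim: invoke minimality of the antichain resolution for indecomposable injectives in a distributive lattice (\Cref{thm::min_resolution_injectives_distributive}, i.e.\ \cite[Theorem~3.2/Corollary~3.3]{IM}), deduce strongness via \Cref{lem::strong_iff_resolution_minimal}, and then apply \Cref{prop::perfect-iff-Boolean}. There is nothing to add.
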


\section{Classification of distributive lattices with perfect injective modules}
We saw in \Cref{purereducedtoinjperfectcor} that the classification of distributive lattices with a pure minimal injective coresolution is reduced to the classification when all indecomposable injective modules are perfect.
In this section we will give this classification and thus also classify all distributive lattices with a pure minimal injective coresolution.\\
We start by giving a combinatorial description of the grade of an interval module.



\begin{proposition}\cite[Proposition 2.3, Appendix IV]{Bj} 
\label{prop::grade_SES}
    Let $0\rightarrow M\rightarrow E \rightarrow N\rightarrow 0$ be a short exact sequence over an Auslander-Gorenstein algebra. Then 
    \[\grade(E) = \min \{\grade(M),\grade(N)\}\]
\end{proposition}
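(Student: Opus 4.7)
The plan is to exploit the long exact sequence for $\Ext_A^\bullet(-,A)$ attached to the short exact sequence $0\to M\to E\to N\to 0$, combined with the Auslander condition built into the Auslander-Gorenstein hypothesis. Set $g=\min\{\grade M,\grade N\}$.

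First I would verify the easy inequality $\grade E\ge g$: for every $i<g$ both $\Ext_A^i(M,A)$ and $\Ext_A^i(N,A)$ vanish, hence so does $\Ext_A^i(E,A)$ by exactness of the long sequence.

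For the reverse inequality $\grade E\le g$ I would split into two cases. If $g=\grade N\le\grade M$, then $\Ext_A^{g-1}(M,A)=0$, and the three-term segment $\Ext_A^{g-1}(M,A)\to\Ext_A^g(N,A)\to\Ext_A^g(E,A)$ yields an injection of the nonzero module $\Ext_A^g(N,A)$ into $\Ext_A^g(E,A)$, so $\grade E\le g$ immediately. The delicate case is $g=\grade M<\grade N$: here $\Ext_A^g(N,A)=0$, so the long exact sequence shortens to
$$0\to\Ext_A^g(E,A)\to\Ext_A^g(M,A)\xrightarrow{\varphi}\Ext_A^{g+1}(N,A),$$
and the problem reduces to showing that $\varphi$ is not injective. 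I would combine two consequences of the Auslander-Gorenstein hypothesis: (i) the Auslander condition, which says that every submodule of $\Ext_A^{g+1}(N,A)$ has grade at least $g+1$; and (ii) the biduality spectral sequence with $E_2$-page $\Ext_{A^{op}}^p(\Ext_A^q(M,A),A)$, from which one deduces the sharp identity $\grade\Ext_A^g(M,A)=g$ whenever $g=\grade M$. An injective $\varphi$ would exhibit $\Ext_A^g(M,A)$ as a submodule of $\Ext_A^{g+1}(N,A)$ of grade $\ge g+1$, contradicting (ii); therefore $\Ext_A^g(E,A)\ne 0$ and $\grade E\le g$.

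The main obstacle is this second case, which genuinely requires the full Auslander-Gorenstein structure and not merely the Auslander condition on Ext modules: the non-injectivity of $\varphi$ is not formal but rests on the sharp identity $\grade\Ext_A^{\grade M}(M,A)=\grade M$, whose proof invokes the biduality spectral sequence already used in the proof of the pureness criterion quoted earlier in the paper.
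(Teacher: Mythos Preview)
The paper does not actually prove this proposition: it is quoted verbatim from Bj\"ork \cite[Proposition 2.3, Appendix IV]{Bj} and used as a black box. So there is no in-paper argument to compare against, and your proposal is in fact the standard proof one finds in Bj\"ork's treatment.

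Your argument is correct. The inequality $\grade E\ge g$ and the case $g=\grade N\le\grade M$ are purely formal from the long exact sequence, as you say. In the remaining case $g=\grade M<\grade N$ you correctly identify the two nontrivial inputs: the Auslander condition, giving $\grade Y\ge g+1$ for every submodule $Y\subseteq\Ext_A^{g+1}(N,A)$, and the sharp equality $\grade_{A^{op}}\Ext_A^{g}(M,A)=g$, which is exactly what the biduality spectral sequence $E_2^{p,q}=\Ext_{A^{op}}^{p}(\Ext_A^{-q}(M,A),A)\Rightarrow H^{p+q}(M)$ yields at the edge (the relevant $E_2$-term $\Ext_{A^{op}}^{g}(\Ext_A^{g}(M,A),A)$ survives to $E_\infty$ because all earlier columns vanish by the definition of $g$). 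These two facts force $\varphi$ to have nonzero kernel, hence $\Ext_A^{g}(E,A)\ne 0$. Your remark that this step genuinely needs the full Auslander--Gorenstein structure, and is of the same flavour as the pureness criterion in \Cref{Bjoerklemma}, is apt: both rest on the same spectral sequence.
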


If $a\leq b$ are two elements of the poset $L$, we denote the interval module associated to the interval $[a,b]$ by $M(a,b).$ More precisely, $M(a,b)$ is the unique indecomposable $KL$-module whose  composition factors are simple modules of the form $S(c)$ for some $a\le c\le b$, and $M(a,b)$ contains each such $S(c)$ exactly once. 
\begin{proposition}\label{gradeFormula}
    Let $L$ be a distributive lattice and $a\leq b\in L$. Then 
    \[\grade(M(a,b)) = \min_{x\in [a,b]}(|\cov(x)|)\]
\end{proposition}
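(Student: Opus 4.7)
The plan is to reduce the computation of $\grade(M(a,b))$ to the grades of the simple composition factors of $M(a,b)$ and then compute each simple grade via the already stated projective resolution of indecomposable injectives.

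First, I would show that for every $x \in L$ one has $\grade(S(x)) = |\cov(x)|$. Since $L$ is distributive, $KL$ is Auslander regular by \Cref{theoremausregiffdistr}. In particular, as noted in the proof of \Cref{cor::AR+pure-iff-inj+simples-perfect} (citing \cite[Theorem 1.1]{KMT}), Auslander regularity is equivalent to $\grade(S) = \pdim(I(S))$ for every simple $S$. Combining this with \Cref{thm::min_resolution_injectives_distributive}, which states $\pdim(I(x)) = |\cov(x)|$, yields $\grade(S(x)) = |\cov(x)|$.

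Next, I would recall that the interval module $M(a,b)$ is by definition the unique indecomposable whose composition series realizes each simple $S(c)$ with $c \in [a,b]$ exactly once. Fix any such composition series
\[0 = M_0 \subset M_1 \subset \cdots \subset M_n = M(a,b),\]
with $M_i/M_{i-1} \cong S(c_i)$ and $\{c_1,\dots,c_n\} = [a,b]$. Applying \Cref{prop::grade_SES} (which is available since $KL$ is Auslander regular, in particular Auslander--Gorenstein) to each short exact sequence $0 \to M_{i-1} \to M_i \to S(c_i) \to 0$ gives
\[\grade(M_i) = \min\{\grade(M_{i-1}),\, \grade(S(c_i))\}.\]
A straightforward induction on $i$ then yields
\[\grade(M(a,b)) = \min_{c \in [a,b]} \grade(S(c)) = \min_{c \in [a,b]} |\cov(c)|,\]
which is the desired formula.

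The argument has no real obstacle beyond assembling already established ingredients; the only point requiring care is to make sure that the SES-grade formula from \Cref{prop::grade_SES} is applied inside the right class of algebras (Auslander--Gorenstein), which is guaranteed here by distributivity of $L$. Everything else is the standard dévissage along a composition series together with the explicit formula $\grade(S(x)) = |\cov(x)|$ derived from the diagonal Auslander regular structure of $KL$.
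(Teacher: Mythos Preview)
Your argument is correct and follows essentially the same route as the paper: reduce $\grade(M(a,b))$ to the minimum of the grades of its simple composition factors via \Cref{prop::grade_SES}, and then identify $\grade(S(x))=|\cov(x)|$. The only cosmetic difference is that the paper cites \cite[Corollary~3.3]{IM} directly for $\grade(S(x))=|\cov(x)|$, whereas you recover it by combining the Auslander regular characterisation $\grade(S)=\pdim(I(S))$ from \cite{KMT} with \Cref{thm::min_resolution_injectives_distributive}.
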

\begin{proof}
By \Cref{prop::grade_SES}, $\grade(M(a,b))$ equals the minimum of the grades of its composition factors. Thus, 
\[\grade(M(a,b)) = \min_{x\in [a,b]}(\grade S(x) ),\] where $S(x)$ is the simple $KL$-module corresponding to vertex $x.$
Since we have $\grade S(x)=|\cov(x)|$ in a distributive lattice by \cite[Corollary 3.3]{IM}, the statement follows. 

\end{proof}
To decide whether an injective module is perfect we can compare the description of the grade to the description of the projective dimension given in \cite[Theorem 3.2]{IM}, see also in \Cref{thm::min_resolution_injectives_distributive}.\\
Recall that a poset $P$ is called \emph{upward-linear} if every $x \in P$ has at most one cover. Here is an example of an upward linear poset $P$ with its distributive lattice of order ideals $L$:
\begin{example}
\label{ex::upward_linear_poset}
See \cref{figure:upwardLinearAndLattice} for an upward linear poset $P$ and its lattice of order ideals $\Ocal(P)$.

\begin{figure}[h]
\begin{minipage}{0.3\textwidth}\centering
\begin{tikzpicture}[>=latex,line join=bevel,]
\node (node_0) at (6.0bp,6.5bp) [draw,draw=none] {$5$};
  \node (node_1) at (36.0bp,6.5bp) [draw,draw=none] {$2$};
  \node (node_2) at (36.0bp,55.5bp) [draw,draw=none] {$3$};
  \node (node_4) at (51.0bp,104.5bp) [draw,draw=none] {$4$};
  \node (node_3) at (66.0bp,55.5bp) [draw,draw=none] {$1$};
  \draw [black,->] (node_1) ..controls (36.0bp,19.603bp) and (36.0bp,30.062bp)  .. (node_2);
  \draw [black,->] (node_2) ..controls (39.908bp,68.746bp) and (43.351bp,79.534bp)  .. (node_4);
  \draw [black,->] (node_3) ..controls (62.092bp,68.746bp) and (58.649bp,79.534bp)  .. (node_4);
\end{tikzpicture} 
\end{minipage}
\begin{minipage}{0.6\textwidth}\centering
\begin{tikzcd}[ampersand replacement=\&,column sep = {6 em,between origins},row sep = {4 em,between origins}]
	\&\&\& {\{1,2,3,4,5\}}\& \\
	\&\& {\{1,2,3,5\}} \&\& {\{1,2,3,4\}}\& \\
	\& {\{1,2,5\}} \& {\{2,3,5\}} \& {\{1,2,3\}}\& \\
	{\{1,5\}} \& {\{2,5\}} \& {\{1,2\}} \& {\{2,3\}}\& \\
	{\{5\}} \& {\{1\}} \& {\{2\}}\& \\
	\& {\{\}}\&
	\arrow[from=2-3, to=1-4]
	\arrow[from=2-5, to=1-4]
	\arrow[from=3-2, to=2-3]
	\arrow[from=3-3, to=2-3]
	\arrow[from=3-4, to=2-3]
	\arrow[from=3-4, to=2-5]
	\arrow[from=4-1, to=3-2]
	\arrow[from=4-2, to=3-2]
	\arrow[from=4-2, to=3-3]
	\arrow[from=4-3, to=3-2]
	\arrow[from=4-3, to=3-4]
	\arrow[from=4-4, to=3-3]
	\arrow[from=4-4, to=3-4]
	\arrow[from=5-1, to=4-1]
	\arrow[from=5-1, to=4-2]
	\arrow[from=5-2, to=4-1]
	\arrow[from=5-2, to=4-3]
	\arrow[from=5-3, to=4-2]
	\arrow[from=5-3, to=4-3]
	\arrow[from=5-3, to=4-4]
	\arrow[from=6-2, to=5-1]
	\arrow[from=6-2, to=5-2]
	\arrow[from=6-2, to=5-3]
\end{tikzcd}
\end{minipage}
\caption{From left to right, $P$ and $\Ocal(P)$}
\label{figure:upwardLinearAndLattice}
\end{figure}

\end{example}

The Hasse quivers of upward-linear posets are forests of rooted trees. The forests of rooted trees on $n$ vertices are in bijection with the rooted trees on $n+1$ vertices by adding a new vertex that covers all roots in the forests. In particular, the number of upward-linear posets up to isomorphisms is given by \url{https://oeis.org/A000081}.

\begin{theorem}\label{pureAreUpwardLinear}
Let $L$ be a distributive lattice. Then the following are equivalent:
\begin{enumerate}
    \item $KL$ is Auslander regular with pure minimal injective coresolution.
    \item All indecomposable injective $KL$ modules are perfect.
    \item $L$ is isomorphic to the lattice of order ideals of an upward-linear poset.
    \item For every cover relation $a\lessdot b$ in $L$ we have $|\cov(a)|\geq |\cov(b)|$.
\end{enumerate}
\end{theorem}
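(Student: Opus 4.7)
The plan is to prove the equivalences cyclically: $(1)\Leftrightarrow(2)\Leftrightarrow(4)\Leftrightarrow(3)$. For $(1)\Leftrightarrow(2)$, distributivity of $L$ together with \Cref{theoremausregiffdistr} ensures that $KL$ is Auslander regular, after which \Cref{purereducedtoinjperfectcor} gives the equivalence between having a pure minimal injective coresolution and every indecomposable injective being perfect.

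For $(2)\Leftrightarrow(4)$, I would use that the indecomposable injective $I(x)$ of $KL$ coincides with the interval module $M(m,x)$, where $m$ is the minimum of $L$. Then \Cref{gradeFormula} gives $\grade I(x)=\min_{y\leq x}|\cov(y)|$, while \Cref{thm::min_resolution_injectives_distributive} gives $\pdim I(x)=|\cov(x)|$. Hence $I(x)$ is perfect iff the minimum in the grade formula is attained at $y=x$, i.e.\ iff $|\cov(y)|\geq |\cov(x)|$ for every $y\leq x$. Imposing this for all $x\in L$ is equivalent to the cover-by-cover condition~$(4)$ by an easy induction along saturated chains.

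For $(3)\Rightarrow(4)$, I invoke Birkhoff to write $L=\Ocal(P)$ for $P$ the poset of join-irreducibles, so that for an order ideal $O$ the set $\cov(O)$ corresponds bijectively to $\min(P\setminus O)$. Given $O\lessdot O\cup\{p\}$ with $P$ upward-linear, the element $p$ leaves the minima, and by upward-linearity the elements of $P$ strictly above $p$ form a single chain; the only candidate to become a new minimal in $\min(P\setminus(O\cup\{p\}))$ is the unique cover $p^{+}$ of $p$ in $P$, since any higher $r>p^{+}$ still has $p^{+}$ as a predecessor outside $O$ (using $p^{+}\notin O$, which follows from $p\notin O$ and downward-closedness of $O$). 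Hence cover counts cannot increase, which is $(4)$.

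The main obstacle is the contrapositive $(4)\Rightarrow(3)$. Assuming some $x\in P$ has two distinct covers $y_1,y_2$ in $P$, I would take $O$ to be the order ideal of $P$ generated by
\[ \{z\in P : z<x\}\ \cup\ \bigcup_{i=1,2}\{z\in P : z<y_i,\ z\neq x\}. \]
The key verifications are that $x\notin O$ (no generator $w<y_i$ can satisfy $x\leq w$ without forcing $x<w<y_i$, contradicting $x\lessdot y_i$) and that $y_1,y_2\notin O$ (as incomparable covers of the same $x$, they cannot lie below any generator). Then $O\lessdot O\cup\{x\}$ is a cover in $L$; by construction every strict predecessor of $y_i$ other than $x$ lies in $O$, so $y_1,y_2$ become minimal in $P\setminus(O\cup\{x\})$, whereas they failed to be minimal in $P\setminus O$ because the predecessor $x$ sat outside $O$. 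Thus $|\cov(O\cup\{x\})|>|\cov(O)|$, contradicting~$(4)$. The delicate part is the bookkeeping showing that this $O$ is indeed an order ideal avoiding $x$ while simultaneously making both $y_1$ and $y_2$ fresh minima upon adjoining $x$.
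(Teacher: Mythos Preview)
Your proof is correct and follows essentially the same route as the paper: the same cycle of implications, the same use of \Cref{purereducedtoinjperfectcor}, \Cref{gradeFormula}, and \Cref{thm::min_resolution_injectives_distributive}, and the same translation of $\cov$ in $L=\Ocal(P)$ into minimal elements of complements of order ideals.

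The only notable difference is in the contrapositive $(4)\Rightarrow(3)$. Your order ideal $O$ generated by $\{z<x\}\cup\{z<y_i:z\neq x\}$ works, and your bookkeeping (that $x,y_1,y_2\notin O$, that $x$ is the lost minimum, that $y_1,y_2$ are gained minima) is sound; the net change in minima is indeed at least $+1$. The paper instead takes the much simpler ideal $O=\{z\in P: x\nleq z\}$, whose complement is the principal filter $\{z:x\leq z\}$ with unique minimum $x$, so $|\cov(O)|=1$ outright; after adjoining $x$, both covers $y_1,y_2$ are visibly minimal in the complement, giving $|\cov(O\cup\{x\})|\geq 2$. This avoids all the delicate verification you flagged. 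Your construction buys nothing extra here, so you may want to swap it for the cleaner one; otherwise the argument is fine as written. (One cosmetic point: in $(3)\Rightarrow(4)$ you should say ``at most one new minimum, namely $p^+$ if it exists,'' to cover the case where $p$ is maximal in $P$.)
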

\begin{proof}
    We have already seen $(1) \Leftrightarrow (2)$ in \Cref{purereducedtoinjperfectcor}.\\
    $(2)\Rightarrow (4)$: Let $a\lessdot b$ be a cover relation in $L$. We consider the injective $I(b)$. Because $I(b)$ is perfect, we have $\grade(I(b)) = \pdim(I(b)) = |\cov(b)|$. Here, the second equality follows from \Cref{thm::min_resolution_injectives_distributive}. From \Cref{gradeFormula} and $I(b) = M(m,b)$ we get for all $c\leq b$ that $|\cov(c)|\geq \grade(I(b)) =|\cov(b)|$. In particular $|\cov(a)|\geq |\cov(b)|$.\\
    $(4)\Rightarrow(2)$: By induction condition $(4)$ implies that $|\cov(a)|\geq |\cov(b)|$ for all $a\leq b$ in $L$. Now fix an $a\in L$ and consider the injective $I(a)$. We have
    \[\grade(I(a)) = \min_{x\in [m,a]}(|\cov(x)|) =|\cov(a)| = \pdim(I(a))\]
    where the first equality is \Cref{gradeFormula}, the second follows from induction on condition $(4)$ and third is a consequence of \Cref{thm::min_resolution_injectives_distributive}.\\
    $(3)\Rightarrow(4)$: Let $\mathcal{P}$ be the poset of join-irreducibles of $L$. For an element $x\in L$ we write $O(x)$ for the corresponding order ideal in $\mathcal P$. We fix $a\lessdot b\in L$ and let $\{x_1,\dots,x_k\}$ be the minimal elements of $\mathcal P \setminus O(a)$. Then the elements covering $a$ in $L$ are given as order ideals by the sets $O(a)\cup \{x_i\}$ for $1\leq i \leq k$. In particular $|\cov(a)| = k$. Without loss of generality let $O(b) = O(a)\cup \{x_k\}$. The minimal elements of $\mathcal P\setminus O(b)$ are the elements $\{x_1,\dots x_{k-1}\}$ and all covers of $x_k$ in $\mathcal P$ that are not comparable to any of the other $x_i$. Because $\mathcal P$ is upward-linear, $x_k$ has at most one cover and we get $|\cov(b)|\leq (k-1) +1 = k = |\cov(a)|$.\\
    $(4)\Rightarrow (3)$: Now assume that $\mathcal P$ is not upward-linear. Then there exists a $y\in \mathcal P$ that admits two different covers $x_1$ and $x_2$. Let $a\in L$ be given by $O(a) = \{z\in \mathcal P \mid y\nleq z\}$. Then $y$ is the only minimal element in $\mathcal P\setminus O(a)$ and hence $|\cov(a)| = 1$. Let $b$ be given by $O(b) = O(a)\cup\{y\}$. Then $b$ covers $a$ in $L$. In addition $x_1$ and $x_2$ are two different minimal elements in $\mathcal P \setminus O(b)$. In particular $|\cov(a)|= 1 < 2\leq|\cov(b)|$.  
\end{proof}

The theorem also yields a characterization of the two-sided condition.
\begin{corollary}\label{cor::2-sided-pure-inj-res-iff-divisor-lattice}
        Let $L$ be a distributive lattice. Then the following are equivalent:
        \begin{enumerate}
            \item $KL$ has a two-sided pure minimal injective coresolution.
            \item $L$ is a divisor lattice.
        \end{enumerate}
\end{corollary}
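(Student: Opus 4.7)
The plan is to bootstrap from \Cref{pureAreUpwardLinear} by applying it on both sides and then translating the resulting combinatorial condition into divisor-lattice form.

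First I would observe that $(KL)^{op} \cong K(L^{op})$, where $L^{op}$ denotes the opposite lattice, and that $L^{op}$ is again a distributive lattice (distributivity is self-dual). Writing $L \cong \mathcal{O}(P)$ via Birkhoff's theorem, the order-reversing bijection between order ideals and order filters of $P$ identifies $L^{op}$ with $\mathcal{O}(P^{op})$. Consequently, $KL$ having a two-sided pure minimal injective coresolution is equivalent, by \Cref{pureAreUpwardLinear} applied to both $L$ and $L^{op}$, to $P$ and $P^{op}$ both being upward-linear.

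Next I would unpack this condition combinatorially. Upward-linearity of $P$ says that each element of $P$ is covered by at most one element of $P$. Upward-linearity of $P^{op}$ translates, after reversing the order, to: each element of $P$ covers at most one element of $P$. Together these say that every vertex of the Hasse diagram of $P$ has at most one element above it and at most one below it in the cover relation. Since $P$ is finite, the Hasse diagram of $P$ is therefore a disjoint union of chains $C_{a_1} \sqcup \cdots \sqcup C_{a_k}$, where $C_a$ denotes the chain with $a$ elements.

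Finally I would identify such lattices with divisor lattices. The lattice of order ideals of a disjoint union of posets is the product of the lattices of order ideals of the components, so
\[
L \cong \mathcal{O}(C_{a_1}) \times \cdots \times \mathcal{O}(C_{a_k})
\]
is a product of finite chains of lengths $a_1+1, \ldots, a_k+1$. Choosing distinct primes $p_1, \ldots, p_k$, this product is exactly the divisor lattice of $n = p_1^{a_1} \cdots p_k^{a_k}$; conversely, every divisor lattice arises in this way from the Hasse diagram of a disjoint union of chains. This yields both implications.

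I do not anticipate any serious obstacle: the argument is a two-step translation, first from the two-sided homological condition to the poset-theoretic condition via \Cref{pureAreUpwardLinear} applied to $P$ and $P^{op}$, then from the combinatorial shape of $P$ (a disjoint union of chains) to the divisor-lattice description via Birkhoff's theorem and the product decomposition of $\mathcal{O}$.
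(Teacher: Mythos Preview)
Your proposal is correct and follows essentially the same approach as the paper's proof: apply \Cref{pureAreUpwardLinear} to both $L$ and $L^{op}\cong\mathcal{O}(P^{op})$ to reduce to $P$ and $P^{op}$ being upward-linear, observe this forces $P$ to be a disjoint union of chains, and identify the resulting lattices with divisor lattices. You spell out more of the intermediate identifications (such as $(KL)^{op}\cong K(L^{op})$ and the product decomposition of $\mathcal{O}$), but the structure of the argument is the same.
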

\begin{proof}
    Let $\mathcal P$ be the poset of join-irreducibles of $L$. By \Cref{pureAreUpwardLinear} the algebra $KL$ has a two-sided minimal injective coresolution if and only if $P$ and $P^{op}$ are upward-linear. That is every element of $P$ has at most one cover and at most one cocover. This is true if and only if $P$ is a disjoint union of chains. The divisor lattices are exactly the lattices of order ideals of disjoint unions of chains.
\end{proof}

We end this section with the following problem:
\begin{problem}
Let $L$ be a finite lattice with incidence algebra $KL$.
Classify $L$ such that all simple and indecomposable injective left and right modules are perfect.
\end{problem}
Here is an example of a lattice satisfying this condition, which is not distributive:
\[\begin{tikzcd}
	& 5 \\
	2 & 3 & 4 \\
	& 1
	\arrow[from=2-1, to=1-2]
	\arrow[from=2-2, to=1-2]
	\arrow[from=2-3, to=1-2]
	\arrow[from=3-2, to=2-1]
	\arrow[from=3-2, to=2-2]
	\arrow[from=3-2, to=2-3]
\end{tikzcd}\]
In forthcoming work the previous problem will be studied for geometric lattices.
As a final application of our results we answer the following question raised by Ajitabh, Smith and Zhang in the beginning of section 4 in \cite{ASZ2}:
\begin{question} \label{ASZquestion}
Let $A$ be an Auslander-Gorenstein ring that is also a polynomial identity ring. Does $A$ have a pure minimal injective coresolution?

\end{question}
Note here that in \cite{ASZ2} the authors just talk about a pure injective coresolution, but for finite dimensional algebras a pure injective coresolution exists if and only if there is a pure minimal injective coresolution.
Here we recall that a ring $R$ is called \emph{polynomial identity ring}  if there is, for some $N > 0$, a polynomial  $F \neq 0$ in the free algebra, $\mathbb{Z}\langle x_1,x_2,\ldots,x_N \rangle$, over the ring of integers in N variables $x_1, x_2, \ldots, x_N$ such that 
$F(r_1,r_2,\ldots,r_N)=0$ for all $N$-tuples $r_1,\ldots,r_N$ of elements in $R$. 
Now by the following result of Feinberg every incidence algebra of a finite poset is a polynomial identity ring:
\begin{theorem} \cite[Theorem 2]{Fe} \label{Feinbergtheorem}
Let $P$ be a finite poset with incidence algebra $A=KP$ and let $F(x_1,\ldots,x_k):=\sum\limits_{\pi \in S_k}^{}{\operatorname{sign} \pi \cdot  x_{\pi(1)} \cdot\ldots\cdot x_{\pi(k)}}$ be the standard polynomial in $k$-variables, where $S_k$ denotes the symmetric group in $k$ letters and $\operatorname{sign} \pi$ the signum of the permutation $\pi$.
Then $A$ is a polynomial identity algebra with 
$F(r_1,\ldots,r_{2n})=0$ for all $r_1,\ldots,r_{2n} \in A$, where $n$ is such that all chains of the poset $P$ have length $\leq n-1$.
\end{theorem}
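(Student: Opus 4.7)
The plan is to prove this by direct expansion of $F(r_1,\dots,r_{2n})$ in the path basis of $KP$ and an application of the pigeonhole principle, in the spirit of the Amitsur--Levitzki proof adapted to incidence algebras. First I would expand each $r_i = \sum_{x \le y} c_i(x,y) p_x^y$ in the standard basis of $KP$ and observe that, since $p_x^y p_y^z = p_x^z$ while products of basis elements whose interior endpoints disagree vanish, each monomial $r_{\pi(1)} \cdots r_{\pi(2n)}$ unfolds as
\[ r_{\pi(1)} \cdots r_{\pi(2n)} = \sum_{x_0 \le x_1 \le \cdots \le x_{2n}} \left(\prod_{j=1}^{2n} c_{\pi(j)}(x_{j-1}, x_j)\right) p_{x_0}^{x_{2n}}, \]
indexed by weakly increasing tuples in $P$.

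Summing over $\pi \in S_{2n}$ with signs and collecting the contributions of a fixed multichain $x_0 \le \cdots \le x_{2n}$, the coefficient of $p_{x_0}^{x_{2n}}$ is exactly
\[ \sum_{\pi \in S_{2n}} \operatorname{sign}(\pi) \prod_{j=1}^{2n} c_{\pi(j)}(x_{j-1}, x_j) = \det A, \]
where $A$ is the $2n \times 2n$ matrix with entries $A_{ij} = c_i(x_{j-1}, x_j)$, by the Leibniz formula. Hence it suffices to show that $\det A = 0$ for every such multichain.

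For the final step, invoke the chain length hypothesis: every strictly increasing chain in $P$ has at most $n$ elements, so the weakly increasing tuple $x_0 \le \cdots \le x_{2n}$ of $2n+1$ entries takes at most $n$ distinct values, has therefore at most $n-1$ strict jumps among its $2n$ consecutive pairs, and consequently at least $n+1$ indices $j \in \{1,\ldots,2n\}$ with $x_{j-1} = x_j$. For each such $j$, the $j$-th column of $A$ depends only on the value $v_j := x_{j-1}$, being equal to $(c_i(v_j, v_j))_{i=1}^{2n}$. These $n+1$ values $v_j$ lie in the set of distinct entries of the multichain, which has cardinality at most $n$, so by pigeonhole two of them coincide, producing two equal columns of $A$ and thus $\det A = 0$. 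I do not anticipate a real obstacle: the only delicate point is bookkeeping the identification between the sum over $S_{2n}$ appearing in $F$ and the Leibniz expansion of the determinant for each fixed multichain, which is entirely formal.
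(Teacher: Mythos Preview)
Your argument is correct. The expansion of each monomial $r_{\pi(1)}\cdots r_{\pi(2n)}$ over multichains, the identification of the coefficient at a fixed multichain with the determinant of $A=(c_i(x_{j-1},x_j))_{i,j}$ via the Leibniz formula, and the pigeonhole step forcing two equal columns are all valid. The counting is right: at most $n$ distinct values among $x_0,\ldots,x_{2n}$ give at most $n-1$ strict jumps, hence at least $n+1$ equality positions $j$ with $x_{j-1}=x_j$; the corresponding columns depend only on the common value $x_j$, and pigeonhole among $\le n$ possible values yields a repeated column.

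As for the comparison: the paper does not supply its own proof of this theorem. It is quoted verbatim from Feinberg \cite{Fe} and used as a black box to conclude that incidence algebras are polynomial identity rings, so there is nothing in the present paper to compare your argument against. Your proof is a clean, self-contained verification in the Amitsur--Levitzki style, working directly in the path basis of $KP$ rather than embedding $KP$ into a matrix ring; it has the virtue of making the role of the chain bound completely transparent.
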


Now using our main result \Cref{pureAreUpwardLinear}, we can give a distributive lattice $L$ with incidence algebra $A$ such that $A$ is Auslander-Gorenstein and a polynomial identity ring (by Theorem \ref{Feinbergtheorem}) but such that $A$ does not have a pure injective coresolution.
The simplest example of such an $A$, where we do not have a pure injective coresolution of $A$ and $A^{op}$ is given in the following example:
\begin{example}
Let $L$ be the distributive lattice with Hasse diagram 
\[\begin{tikzcd}
	& 6 \\
	& 5 \\
	3 && 4 \\
	& 2 \\
	& 1
	\arrow[from=2-2, to=1-2]
	\arrow[from=3-1, to=2-2]
	\arrow[from=3-3, to=2-2]
	\arrow[from=4-2, to=3-1]
	\arrow[from=4-2, to=3-3]
	\arrow[from=5-2, to=4-2]
\end{tikzcd}\]

Let $A$ be the incidence algebra of $L$. Then obviously $A \cong A^{op}$ and by Theorem \ref{pureAreUpwardLinear} $A$ and $A^{op}$ do not have a pure injective coresolution.
But as $L$ is distributive, $A$ is Auslander-Gorenstein and by Theorem \ref{Feinbergtheorem} $A$ is a polynomial identity ring, answering Question \ref{ASZquestion} in the negative.
\end{example}

\section{Appendix: Verifying the pure minimal injective coresolution condition for Auslander regular algebras using QPA}

In this appendix we show how to use the GAP-package QPA to test whether a given Auslander regular algebra $A=KQ/I$ with finite connected acyclic quiver $Q$ and admissible ideal $I$ has a pure minimal injective coresolution. We assume here that $Q$ is acyclic so that we have an explicit bound on the global dimension of $A$, namely $\gldim A \leq n-1$ where $n$ is the number of points in $Q$, see for example \cite{MS}. By our main theorem \ref{propositioncharacterisepureausreg}, this is equivalent to $A$ being Auslander regular with all simple and indecomposable injective $A$-modules perfect.
Here is the QPA code that you can copy and paste into your terminal in order to use the command "IsAuslanderregularwithPMIC" that will check whether a given $KQ/I$ as above is indeed Auslander regular with a minimal pure injective coresolution:

\begin{tiny}
\begin{verbatim}
DeclareOperation("gradeofmodule",[IsList]);
InstallMethod(gradeofmodule, "for a representation of a quiver", [IsList],0,function(LIST)
local A,M,RegA,g,temp,i,UT,nn;
A:=LIST[1];
M:=LIST[2];
nn:=Size(SimpleModules(A));
if IsProjectiveModule(M)=true then return(0);else
RegA:=DirectSumOfQPAModules(IndecProjectiveModules(A));
g:=GlobalDimensionOfAlgebra(A,nn);
temp:=[];Append(temp,[Size(HomOverAlgebra(M,RegA))]);
for i in [0..g-1] do Append(temp,[Size(ExtOverAlgebra(NthSyzygy(M,i),RegA)[2])]);od;
UT:=Filtered([0..g],x->temp[x+1]>0);
return(Minimum(UT));fi;
end);

DeclareOperation("kGortestdegree",[IsList]);
InstallMethod(kGortestdegree, "for a representation of a quiver", [IsList],0,function(LIST)
local A,g,injA,CoRegA,temp,temp2,i,temp3,uu,nn;
A:=LIST[1];
nn:=Size(SimpleModules(A));
g:=GlobalDimensionOfAlgebra(A,nn);
injA:=IndecInjectiveModules(A);CoRegA:=DirectSumOfQPAModules(injA);
temp:=[];for i in [0..g] do Append(temp,[Source(ProjectiveCover(NthSyzygy(CoRegA,i)))]);od;
temp2:=[];for i in [0..g] do Append(temp2,[i-InjDimensionOfModule(Source(ProjectiveCover(NthSyzygy(CoRegA,i))),nn)]);od;
temp3:=Filtered([0..g],x->temp2[x+1]<0);
Append(temp3,[g]);
uu:=Minimum(temp3);
return(uu);
end);

DeclareOperation("IsAuslanderGorenstein",[IsList]);
InstallMethod(IsAuslanderGorenstein, "for a representation of a quiver", [IsList],0,function(LIST)
local A,g,gg,nn;
A:=LIST[1];
nn:=Size(SimpleModules(A));
g:=GorensteinDimensionOfAlgebra(A,nn);
gg:=kGortestdegree([A]);
return(g-gg=0);
end);

DeclareOperation("isperfectmodule",[IsList]);
InstallMethod(isperfectmodule, "for a representation of a quiver", [IsList],0,function(LIST)
local A,M,t,tt,nn;
A:=LIST[1];
M:=LIST[2];
nn:=Size(SimpleModules(A));
if IsProjectiveModule(M)=true then return(1=1);else
t:=ProjDimensionOfModule(M,nn);
tt:=gradeofmodule([A,M]);
return(t=tt);fi;
end);

DeclareOperation("allsimperfect",[IsList]);
InstallMethod(allsimperfect, "for a representation of a quiver", [IsList],0,function(LIST)
local A,simA,W;
A:=LIST[1];
simA:=SimpleModules(A);
W:=Filtered(simA,x->isperfectmodule([A,x])=true);
return(W=simA);
end);

DeclareOperation("allinjperfect",[IsList]);
InstallMethod(allinjperfect, "for a representation of a quiver", [IsList],0,function(LIST)
local A,injA,W;
A:=LIST[1];
injA:=IndecInjectiveModules(A);
W:=Filtered(injA,x->isperfectmodule([A,x])=true);
return(W=injA);
end);

DeclareOperation("IsAuslanderregularwithPMIC",[IsList]);
InstallMethod(IsAuslanderregularwithPMIC, "for a representation of a quiver", [IsList],0,function(LIST)
local A;
A:=LIST[1];
return(IsAuslanderGorenstein([A]) and allsimperfect([A]) and allinjperfect([A]));
end);
\end{verbatim}
\end{tiny}

Here is a finite dimensional quiver algebra that has a pure minimal injective coresolution that is not an incidence algebra:
\begin{example}
Let $A=KQ/I$ with
$Q=$
\[\begin{tikzcd}
	4 && 5 \\
	& 3 \\
	1 && 2
	\arrow["c", from=2-2, to=1-1]
	\arrow["d"', from=2-2, to=1-3]
	\arrow["a", from=3-1, to=2-2]
	\arrow["b"', from=3-3, to=2-2]
\end{tikzcd}\]
and $I=\langle ac,bd \rangle$.

    The algebra is gentle and Auslander regular with a pure minimal injective coresolution. For more information on the classifcation of gentle Auslander-Gorenstein algebras, see \cite{Kla}.
    Here is a QPA code on how to verify that $A$ is indeed Auslander regular with minimal pure injective coresolution:
    \begin{tiny}
    \begin{verbatim}
        Q:=Quiver(5,[[1,3,"a"],[2,3,"b"],[3,4,"c"],[3,5,"d"]]);
        KQ:=PathAlgebra(Rationals,Q);AssignGeneratorVariables(KQ);rel:=[a*c,b*d];
        A:=KQ/rel;
        IsAuslanderregularwithPMIC([A]);
    \end{verbatim}
    \end{tiny}
\end{example}

We next verify that the example from \Cref{nonlatticeexample} is indeed Auslander regular with a minimal pure injective coresolution:
\begin{example} \label{qpacodecomplicatedexample}
To obtain the incidence algebra $KP$ in QPA and verify that it is indeed Auslander regular with a minimal pure injective coresolution use the following code:
\begin{tiny}
\begin{verbatim}
W:=[["'x0'", "'x1'", "'x2'", "'x3'", "'x4'", "'x5'", "'x6'", "'x7'"],
[["'x0'", "'x1'"], ["'x0'", "'x3'"], ["'x1'", "'x2'"], ["'x1'", "'x6'"], ["'x2'", "'x4'"], ["'x3'", "'x4'"], 
["'x3'", "'x5'"], ["'x4'", "'x7'"], ["'x5'", "'x6'"], ["'x6'", "'x7'"]], [["'x0'", "'x1'"], ["'x0'", "'x2'"], 
["'x0'", "'x3'"], ["'x0'", "'x4'"], ["'x0'", "'x5'"], ["'x0'", "'x6'"], ["'x0'", "'x7'"], ["'x1'", "'x2'"], 
["'x1'", "'x4'"], ["'x1'", "'x6'"], ["'x1'", "'x7'"], ["'x2'", "'x4'"], ["'x2'", "'x7'"], ["'x3'", "'x4'"], 
["'x3'", "'x5'"], ["'x3'", "'x6'"], ["'x3'", "'x7'"], ["'x4'", "'x7'"], ["'x5'", "'x6'"], ["'x5'", "'x7'"],
["'x6'", "'x7'"]]];P:=Poset(W[1],W[2]);A:=PosetAlgebra(Rationals,P);IsAuslanderregularwithPMIC([A]);
\end{verbatim}
\end{tiny}
\end{example}

\end{document}